\documentclass[envcountsect,runningheads]{llncs}

\usepackage[ruled]{algorithm2e} 


\usepackage{xcolor}
\usepackage[utf8]{inputenc} 
\usepackage[T1]{fontenc}    
\usepackage{hyperref}       
\usepackage{url}            
\usepackage{amsfonts}       
\usepackage{amsmath}
\usepackage{subcaption}
\usepackage{amssymb}

\usepackage{graphicx}


\spnewtheorem{prop}{Proposition}[section]{\bfseries}{\itshape}

\newcommand{\mS}{{{\mathcal M}}} 
\newcommand{\setR}{{\mathcal R}}
\newcommand{\setC}{{\mathcal C}}

\newcommand{\Ad}{{\mathcal A}_d}
\newcommand{\As}{{\mathcal A}_s}

\newcommand{\xs}{x^*}
\newcommand{\LsC}{L^*_C}
\newcommand{\lsd}{l^*_d}

\newcommand{\Bbl}{\Big ( }
\newcommand{\Bbr}{\Big ) }
\newcommand{\Bsbl}{\Big [ }
\newcommand{\Bsbr}{\Big ] }

\newcommand{\dl}{\delta}
\newcommand{\ddl}{\delta_{l,1}}
\newcommand{\ddr}{\delta_{l,2}}
\newcommand{\dsl}{\delta_{s,1}}
\newcommand{\dsr}{\delta_{s,2}}

\DeclareMathOperator*{\argmax}{arg\,max}

\newtheorem{assumption}{Assumption}

\title{Stable Community Structures and Social Exclusion}

\author{Boxuan Li\inst{1}\and
Martin Carrington\inst{2}\and
Peter Marbach\inst{2}}
\authorrunning{B. Li  et al.}
%
\institute{The University of Hong Kong (HKU), Hong Kong\\
\email{liboxuan@connect.hku.hk}\\  
  \and
  University of Toronto, Canada\\
\email{\{carrington,marbach\}@cs.toronto.edu}}



\begin{document}
\maketitle

\begin{abstract}
In this paper we study social exclusion in social (information) networks using a game-theoretic approach, and study the stability of a certain class community structures that are a Nash equilibrium. The main result of our analysis shows that all stable community structures (Nash equilibria) in this class are community structures under which some agents are socially excluded, and do not belong to any of the communities.  This result is quite striking as it suggests that social exclusion might be the ``norm'' (an expected outcome) in social networks, rather than an anomaly.
\end{abstract}

\section{Introduction}
\label{section:introduction}
Social exclusion has been recognized as an important aspect in understanding social networks~\cite{who}. Roughly, social exclusion can be characterized by a distribution of goods and services that excludes certain (groups of) individuals from having any access at all to these goods and services. Or the goods and services that they have access to do not match their needs and as such do not provide any ``benefit'. 
Two key questions for understanding social exclusion are  a) ``what is the process that leads to social exclusion?'', and b) ``what (social) policies can be put in place in order to re-integrate excluded (marginalized) individuals back into a community (society)?''. In this paper we focus on the first question and  study the process that leads to  social exclusion. While concentrating on this question, we believe that the models and insights into the process of how social exclusion occurs can potentially also be used to analyze and design (social) policies to counteract social exclusion.

The question  of how social exclusion occurs has been studied in the literature where the focus has been on studying “who  is  doing  the excluding?”~\cite{aktinson}. In this paper we take a different approach and rather than focusing on the question ``who is doing the exclusion'', we study whether social exclusion might be in fact a structural property of communities in social networks. To do this, we use a game-theoretic approach to model the interaction among agents in a social network that has been studied by Carrington and Marbach~\cite{carrington2019community}. Using this game-theoretic framework, we study  a) which community structures (Nash equilibria) that emerge in a social network are stable, and b) whether there exist stable community structures under which some agents are  excluded (marginalized) and do not belong to any community. 

The key result that we obtain through our analysis is that the only community structures (Nash equilibria) that are stable are community structures with social exclusion. That is, the only community structures that are stable are structures under which some agents are marginalized and do not belong to any community. This is a striking result that suggests that social exclusion is indeed a ``reality'' (inevitable) in social networks. In this sense, the results obtained by our analysis suggest a new, and fundamentally different, understanding of social exclusion. That is rather than focusing on the question ``who is doing the exclusion'', one should focus on how do we deal (as a society) with social exclusion as a systematic property of social networks? We discuss this in more details in Section~\ref{section:conclusion}.


The rest of the paper is organized as follows. In Section \ref{section:background} we describe the model we adopt to study community networks. In Section \ref{sec:model} we introduce the perturbation model that we use to study the stability of Nash equilibria. In Section \ref{section:analysis} we present our main results.


\section{Related Work}
\label{section:related_work}
Studying social exclusion has received considerable attention in  economic, social and health sciences; we refer to~\cite{who} for an overview of this vast literature. However studying social exclusion using a formal, model-based approach, has obtained much less attention. The only existing literature that uses a formal model-based approach to study social exclusion that we are aware of  is the paper by Carrington and Marbach~\cite{carrington2019community}.

Using a game-theoretic approach, the research in~\cite{carrington2019community} formally studies whether  in social (information) networks there exist Nash equilibria under which some agents are marginalized, and not included in any of the communities. To do that, the work in~\cite{carrington2019community} considers a particular type of social networks  where agents (individuals) share/exchange information. Each agent chooses to join communities that maximizes its own utility obtained from content obtained, and shared, within the community. The analysis in~\cite{carrington2019community} shows that there exists a class of Nash equilibria for the resulting game under which some agents are excluded (marginalized) from the community structure, i.e. they do not belong to (join) any of the communities. The reason for this is that these agents would have a negative utility in all of the communities that exist in the Nash equilibrium. These agents then have the choice to either join a community where the utility they receive is negative, or not join any community at all (and obtain a utility of zero).  In this situation agents are better off not joining any community, and they become marginalized.

While the analysis in~\cite{carrington2019community} shows that Nash equilibria with marginalized agents do exist, it does not address the question whether these Nash equilibria are indeed likely to occur and persist in a social network. Note that communities  in social network constantly change (are perturbed)  as some agents  leave, and other agents join, the community.  For this situation, we are interested in studying whether a Nash equilibrium is stable in the sense that it is robust to small changes (perturbations). Stable Nash equilibria are likely to persist and hence to be observed in social networks. On the other hand, Nash equilibria that are not stable will eventually vanish, and as a result are not likely to occur.  In the following  we are interested in whether Nash equilibria with marginalized agents are robust to perturbations, and hence likely to occur and persist in social networks.

In our analysis we use the concept of a stable Nash equilibrium that we formally define in Section~\ref{sec:model}. Roughly,  we define a stable Nash equilibrium as a Nash equilibrium that is robust to (local) perturbations. There is an extensive literature in game-theory that studies the properties of Nash equilibria under perturbations.
These perturbations could either be perturbations to the agents' value function, or to the agents' (players') strategies~\cite{jackson,balcan}. The approach that we consider is most closely related to stochastic fictitious play~\cite{fudenberg,hofbauer,candogan} where each player's payoffs are perturbed in each period by
random shocks. The convergence results of stochastic fictitious play has been analyzed for games  with  an  interior  evolutionary stable strategy (ESS),  zero  sum  games,  potential  games, near potential games and  supermodular  games. These results are obtained using techniques from stochastic approximation theory that show that one can characterize the perturbed best response dynamic of stochastic fictitious games by a differential equation defined by the expected motion of the stochastic process. We use the same approach in this paper where we model the perturbed best response dynamics of the agents by a differential equation as given in Section~\ref{sec:model}.


\section{Background}
\label{section:background}

In this section we describe the model and results of~\cite{carrington2019community} that we use for our analysis. The model considers the situation where agents produce (generate)  and consume (obtain) content in a social (information) network. Agents can  form communities in order to share/exchange content more efficiently, and obtain a certain utility from joining a given community. Using a game-theoretic framework, the community structure that emerges is characterized by a Nash equilibrium.

More precisely, the model in~\cite{carrington2019community} is given as follows. Assume that each content item that is being produced in the social (information) network can be associated with a particular topic, or content type. Furthermore assume that there exists  a structure that relates different content topics with each other. In particular, assume there exists a measure of ``closeness'' between content topics that characterizes how strongly related two content topics are. To model this situation, the topic of a content item is given by a point $x$ in a metric space, and the closeness between two content topics $x,x' \in \mS$ is then given by the distance measure $d(x,x')$, $x,x' \in \mS$, for the metric space $\mS$.  

The set of agents in the network, and agents' interests as well as agents' ability to produce content are then given as follows. Assume that there is a set $\Ad$ of agents that consume content, and a set $\As$ of agents that produce content, where the subscripts stand for ``demand" and ``supply''. Furthermore, associate with each agent that consumes content a center of interest $y \in \mS$, i.e. the center of interest $y$ of an agent is the topic that the agent is most interested in. The interest in content topic $x$ of an agent with center of interest $y$ is given by
\begin{equation}\label{eq:p}
p(x|y) = f(d(x,y)), \qquad x,y \in \mS,
\end{equation}
where $d(x,y)$ is the distance between the center of interest $y$ and topic $x$, and $f:[0,\infty) \mapsto [0,1]$ is a non-increasing function. The interpretation of the function $p(x|y)$ is as follows: when an agent with center of interest $y$ consumes (reads) a  content item on topic $x$, then it finds it interesting with probability $p(x|y)$ as given by Eq.~\eqref{eq:p}.
As the function $f$ is non-increasing, this model captures the intuition that the agent is more interested in topics that are close to its center of interest $y$. 

Similarly, given an agent that produces content, the center of interest $y$ of the agent is the topic for which  the agent is most adept at producing content.  The ability of the agent to produce content on topic $x \in \mS$ is given by
\begin{equation}\label{eq:q}
q(x|y) = g(d(x,y)),
\end{equation}
where $g:[0,\infty) \mapsto [0,1]$ is a non-increasing function.

  In the following we identify an agent by its center of interest $y \in \mS$, i.e. agent $y$ is the agent with center of interest $y$. As a result we have that $\Ad \subseteq \mS$ and $\As \subseteq \mS$.

\subsection{Community $C = (C_d, C_s)$}\label{ssec:information_community}

A  community $C = (C_d, C_s)$ consists of a set of agents that consume content $C_d \subseteq \Ad$ and a set of agents that produce content $C_s \subseteq \As$.
Let $\beta_C(x|y)$ be the rate at which agent $y \in C_s$ generates content items on topic $x$ in community $C$. Let $\alpha_{C} (y)$ be the fraction of content produced in community $C$ that  agent $y \in C_d$ consumes. To define the utility for content consumption and production, assume that when an agent consumes a single content item, it receives a reward equal to 1 if the content item is of interest and relevant, and pays a cost of $c>0$ for consuming the item. The cost $c$ captures the cost in time (energy) to read/consume a content item.
Using this reward and cost structure, the utility rate (``reward minus cost") for content consumption of agent $y \in C_d$ is given by 
\begin{align}
U_C^{(d)}(y) &= 
 \alpha_{C}(y) \int_{x \in \mS} \Bsbl Q_C(x) p(x|y) - \beta_{C}(x)c \Bsbr dx, \nonumber
\end{align}
where 
$$ Q_C(x) = \int_{y \in C_s} \beta_C(x|y) q(x|y) dy, \;  \mbox{ and } \; \beta_C(x) = \int_{y \in C_s} \beta_C(x|y) dy.
$$
Similarly, the utility rate for content production of agent $y \in C_s$ is given by
$$U_C^{(s)}(y) = 
 \int_{x \in \mS} \beta_{C}(x|y)  \Bsbl q(x|y)P_C(x) - \alpha_{C} c \Bsbr dx  ,$$ 
where
$$P_C(x) = \int_{y \in C_d} \alpha_C(y) p(x|y) dy, \; \mbox{ and } \;
\alpha_C = \int_{z \in C_d} \alpha_C(z) dz.
$$
The utility rate for content production captures how ``valuable'' the content produced by agent $y$ is for the set of content consuming agents $C_d$ in the community $C$~\cite{carrington2019community}.

\subsection{Community Structure and Nash Equilibrium}
\label{section:cs}
A community structure defines   how agents organize themselves into communities, where in each community agents produce and consume content as described in the previous subsection.

A community structure is given by a triplet 
 $ (\mathcal{C}, \{\alpha_\mathcal{C}(y) \}_{y \in \Ad}, \{\beta_\mathcal{C}(\cdot|y) \}_{y \in \As} )$, 
where  $\mathcal{C}$ is the set of communities $\mathcal{C}$ that exist in  the structure, and 
$$
 \alpha_\mathcal{C}(y) = \{\alpha_{C}(y)\}_{C \in \mathcal{C}}, y \in \Ad, 
  \ \ \mathrm{and} \ \ 
  \beta_\mathcal{C}(\cdot|y) = \{\beta_{C}(\cdot|y)\}_{C \in \mathcal{C}}, y \in \As,  
$$
  are the consumption fractions and production rates, respectively, that agents allocate to the different communities $C \in \mathcal{C}$.

Assume that the total consumption fractions and production rates of each agent are bounded by $0 < E_p  \leq 1$, and $E_q >0$, respectively, i.e. we have that
$$
\Vert\alpha_\mathcal{C}(y)  \Vert= 
\sum_{C \in \mathcal{C}} \alpha_{C} (y)  \leq E_p , \;y \in \Ad,
\; \mbox{ and } \;
\Vert\beta_\mathcal{C}(y)  \Vert=
 \sum_{C \in \mathcal{C}} \Vert \beta_{C} (\cdot|y)  \Vert  \leq E_q, \; y \in \As,
$$
where 
$   \Vert \beta_{C} (\cdot|y)  \Vert = \int_{x \in \mS} \beta_C(x|y) dx $.

To analyze the interaction among agents, assume that agents join communities in order to maximize their own utility rates. That is,  agents join communities, and choose allocations $\alpha_\mathcal{C}(y)$ and $\beta_\mathcal{C}(\cdot|y)$, in order  to maximize their own  consumption, and production utility rates, respectively.

A Nash equilibrium is then given by a  community structure \\ $(\mathcal{C}^*, \{\alpha_\mathcal{C}^*(y) \}_{y \in \Ad}, \{\beta_\mathcal{C}^*(\cdot|y) \}_{y \in \As  } )$ such that for all agents $y \in \Ad$ we have that 
$$\alpha_\mathcal{C}^*(y) = \argmax_{\alpha_\mathcal{C}(y): \Vert\alpha_\mathcal{C}(y)  \Vert \leq E_p} \sum_{C \in \mathcal{C}}  U_C^{(d)} (y), $$
and for all agents $y \in \As$, we have that
$$ \beta_\mathcal{C}^*(\cdot|y) = 
\argmax_{\beta_\mathcal{C}(\cdot|y): \Vert\beta_\mathcal{C}(y)  \Vert \leq E_q} 
\sum_{C \in \mathcal{C}}  
U_C^{(s)} (y).$$

\subsection{Community Structure $\mathcal{C}(L_C,l_d)$}
The above model was analyzed in~\cite{carrington2019community} for the case of a specific metric space, and a specific family of information communities. More precisely, the analysis in~\cite{carrington2019community} considers the one-dimensional metric space given by an interval  $\setR = [-L, L) \subset \mathbb{R}$, $L >0 $, with the torus metric, i.e. the distance between two points $x,y \in \setR$ is given by 
$$d(x,y) = ||x-y|| = \min \{ |x-y|, 2L - |x-y|\},$$
  where $| x |$ is the absolute value of 
  $x \in \mathbb{R}$. The metric space $\setR$ with the torus metric is the simplest (non-trivial) one-dimensional metric space for the analysis of community structures in information networks. The reason for this is that the torus metric is ``symmetric'' and does not have any ``border effects'', which simplifies the analysis.
  
  Furthermore, the analysis in~\cite{carrington2019community} assumes that
$\Ad = \As = \setR$,
i.e. for each content topic $x \in \setR$ there exists an agent in $\Ad$ who is most interested in content of type $x$, and  there exists an agent in $\As$ who is most adept at producing  content of type $x$.

In addition, the analysis in~\cite{carrington2019community} considers a particular family  of community structures $\setC(L_C,l_d)$, $L_C >0$ and $0 < l_d \leq L_C$, given as follows.


Let $N \geq 2$ be a given integer. Furthermore, let $L_C = \frac{L}{N}$,
where $L$ is the half-length of the metric space $\setR=[-L,L)$, 
and let $l_d$ be such that $ 0 < l_d \leq L_C$.
Finally, let $ \{m_k\}_{k = 1}^N$ be a set of $N$ evenly spaced points on the metric space $\setR=[-L,L)$ given by
$ m_{k+1} =  m_1 + 2L_Ck$, $k = 1,...,N-1$.

Given $L_C$, $l_d$, and $m_k$, $k=1,...,N$, as defined above, the set of communities  $\mathcal{C} =  \{C^k = (C_d^k,C_s^k)\}_{k = 1}^N$ of the structure $\setC(L_C,l_d)$ is then given by the intervals 
$$C_{d}^k =  [m_k - l_d,m_k + l_d) \mbox{ and }\;
C_{s}^k =  [m_k - L_C,m_k + L_C).$$

Furthermore, the allocations $\{\alpha_\mathcal{C}(y) \}_{y \in \mathcal{R}}$ and 
$\{\beta_\mathcal{C}(\cdot|y) \}_{y \in \mathcal{R}  }$ of the community structure $\mathcal{C}(L_C,l_d)$ are given by 
\begin{equation}\label{eq:a_C}
\alpha_{C^k}(y) 
=
\begin{cases}
E_p & y \in C^k_d \\
0 & \text{otherwise}
\end{cases},
 \qquad \qquad k = 1,...,N,
\end{equation}
and 
\begin{equation}\label{eq:b_C}
\beta_{C^k}(\cdot|y) 
=
\begin{cases}
E_q \delta(x-\xs(y)) & y \in C^k_s \\
0 & \text{otherwise}
\end{cases},
 \qquad \qquad k = 1,...,N,
\end{equation}
where 
\[
\xs(y) = \argmax_{x \in \mathcal{R}} q(x|y) P_{C^k}(x).
\]

Note that for $l_d = L_C$, the community structure $\setC(L_C,l_d) = \setC(L_C,L_C)$, and all agents belong to at least one community in $\setC(L_C,L_C)$.
On the other hand if we have that $l_d < L_C$ then community structure has ``gaps'', and there are agents that do not belong to any community.
In particular, the content consuming agents in the sets
$$D^k = [m_k + l_d,m_{k+1}-l_d), \qquad k=1,...,N-1,$$
and $D^N =  [m_N + l_d,m_1-l_d)$,  do not belong to any community in $\setC(L_C,l_d)$. This means that these agents are marginalized, and excluded from the community structure.


\subsection{Nash Equilibria  $\setC(L^*_C,l^*_d)$}
  
To analyze the existence of  Nash equilibria within the family $\setC(L_C,l_d)$ of community structures as defined in the previous section, ~\cite{carrington2019community} made the following assumptions for the functions $f$ and $g$  that are used in Eq.~\eqref{eq:p} and Eq.~\eqref{eq:q}.
\begin{assumption}
\label{assumption:simple_fg}
The function $f: [0,\infty) \mapsto [0,1]$ is given by 
$f(x) = \mathrm{max} \{ 0, f_0 - ax \}$, where $f_0 \in (0,1]$ and  $a >0$.
The function $g: [0,\infty) \mapsto [0,1] $ is given by
$g(x) = g_0$, where $g_0 \in (0,1]$.
Furthermore, we have that $f_0 g_0 > c$.
\end{assumption}
The condition in Assumption~\ref{assumption:simple_fg} that $f_0 g_0 > c$ is a necessary condition for a Nash equilibrium to exist, i.e. if this condition is not true, then there does not exist a Nash equilibrium~\cite{carrington2019community}. 
Under the above assumptions, the following two results regarding the existence, and properties of,  Nash equilibria within the family $\setC(L_C,l_d)$ of community structures were obtained in~\cite{carrington2019community}.

The first result states that there always exists a Nash equilibrium with $l_d = L_C$  within the family $\setC(L_C,l_d)$ of community structures as defined above.

\begin{prop}\label{prop:NE_LC}
  Let the functions $f$ and $g$ be as given in Assumption~\ref{assumption:simple_fg}. Then the  community structure $\mathcal{C}(L^*_C,\LsC)$  is a Nash equilibrium if, and only if,
$$\LsC \leq  \frac{f_0}{a} - \frac{c}{a g_0}.$$
\end{prop}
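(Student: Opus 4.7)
The plan is to substitute the explicit allocations \eqref{eq:a_C} and \eqref{eq:b_C} into the community quantities $P_{C^k}$, $Q_{C^k}$ and $\beta_{C^k}$, reduce each agent's best-response problem to a simple scalar comparison, and then read off the Nash condition from the marginal (boundary) consumer. Under \eqref{eq:a_C},
\[
P_{C^k}(x) = E_p \int_{m_k - \LsC}^{m_k + \LsC} f(d(x,y))\,dy
\]
is symmetric about $m_k$ and, by Assumption~\ref{assumption:simple_fg}, strictly decreasing in the torus distance $d(x, m_k)$ on a neighborhood of $m_k$; hence its unique maximizer on $\setR$ is $x = m_k$. Since $q(x|y) = g_0$ is constant in $x$, this forces $\xs(y) = m_k$ for every producer $y \in C_s^k$, and \eqref{eq:b_C} then yields the point masses $Q_{C^k}(x) = 2 \LsC E_q g_0 \delta(x - m_k)$ and $\beta_{C^k}(x) = 2 \LsC E_q \delta(x - m_k)$.

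With these expressions, the consumer utility collapses to
\[
U^{(d)}_{C^k}(y) = 2 \LsC E_q\, \alpha_{C^k}(y) \bigl[g_0 p(m_k|y) - c\bigr].
\]
Because the consumer's total objective is linear in the allocation vector $\{\alpha_{C^j}(y)\}_{j}$ under the single budget $E_p$, the best response concentrates the whole budget on the community $C^j$ that maximizes $g_0 p(m_j|y) - c$, provided this maximum is nonnegative (otherwise the best response is the zero allocation). For $y \in C_d^k = [m_k - \LsC, m_k + \LsC)$, the torus metric gives $d(y, m_k) \leq d(y, m_j)$ for every $j \neq k$, so $C^k$ is the maximizing community. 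Allocation \eqref{eq:a_C} is therefore a best response for all $y \in C_d^k$ if and only if $g_0 p(m_k|y) - c \geq 0$ throughout $C_d^k$. Substituting $p(m_k|y) = f_0 - a\, d(y, m_k)$ and taking the worst case $d(y, m_k) = \LsC$, this is exactly
\[
g_0\bigl(f_0 - a \LsC\bigr) - c \geq 0 \;\Longleftrightarrow\; \LsC \leq \frac{f_0}{a} - \frac{c}{a g_0}.
\]

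On the producer side, substituting \eqref{eq:b_C} and the value $P_{C^k}(m_k) = 2 E_p \LsC (f_0 - a \LsC / 2)$ gives
\[
U^{(s)}_{C^k}(y) = 2 \LsC E_p E_q \bigl[ g_0(f_0 - a \LsC/2) - c\bigr],
\]
which is independent of $y$ and $k$. Producers are therefore indifferent across the $N$ communities and across choices of production point once we fix that they produce at the center, so \eqref{eq:b_C} is itself a best response. The corresponding nonnegativity $\LsC \leq 2 f_0/a - 2c/(a g_0)$ is strictly weaker than the consumer condition, hence automatic. Combining the two sides, $\mathcal{C}(\LsC, \LsC)$ is a Nash equilibrium if and only if $\LsC \leq f_0/a - c/(a g_0)$.

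The main obstacles will be three careful checks rather than the core algebra. First, the ``only if'' direction: if the inequality fails, the boundary agent $y = m_k + \LsC$ has a strictly negative coefficient on $\alpha_{C^k}(y)$ and also (by the torus geometry) a nonpositive coefficient on every $\alpha_{C^j}(y)$, so her unique best response is the zero allocation, contradicting \eqref{eq:a_C}. Second, the delta-function identities for $Q_{C^k}$ and $\beta_{C^k}$ must be justified as distributional pairings with the continuous test function $p(\cdot|y)$, noting that the Dirac mass at $m_k$ is accumulated by integrating $\delta(x - \xs(y'))$ over $y' \in C_s^k$ with $\xs(y') \equiv m_k$. Third, the strict monotonicity of $f$ used to conclude $\xs(y) = m_k$ implicitly requires $\LsC < f_0/a$, which is guaranteed by the claimed bound itself, so the argument is internally consistent.
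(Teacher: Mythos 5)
The paper does not actually prove Proposition~\ref{prop:NE_LC}: it is quoted verbatim from~\cite{carrington2019community} as a background result, so there is no in-paper argument to compare yours against. That said, your reconstruction is correct and lines up with the fragments of that machinery this paper does reproduce: your identification $\xs(y)=m_k$ is exactly Lemma~\ref{lemma:xs}, and your reduced consumer and producer utilities agree with Eq.~\eqref{prod_cons_intervals_consumption}--\eqref{prod_cons_intervals_production} after substituting $q\equiv g_0$, $\xs(z)\equiv m_k$ and $|C_s^k|=2\LsC$; the boundary consumer at distance $\LsC$ from $m_k$ then yields the threshold $\LsC\leq f_0/a-c/(ag_0)$, and the producer-side nonnegativity threshold $2(f_0/a-c/(ag_0))$ is indeed implied by it. Two points are worth tightening. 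First, in the ``only if'' direction a single boundary agent is a null set in this continuum model; you should invoke continuity of $y\mapsto g_0p(m_k|y)-c$ to exhibit a positive-measure set of consumers near the boundary whose unique best response is the zero allocation. Second, on the producer side, nonnegativity of $U^{(s)}_{C^k}$ is not by itself a best-response certificate: you also need that no other production point or community offers a strictly larger coefficient in the (linear) producer objective, which your indifference remark supplies only implicitly --- it follows because every $P_{C^j}$ attains the same maximum value at its own center $m_j$ and $\alpha_{C^j}=2\LsC E_p$ is the same for all $j$. With those two checks made explicit, the argument is complete.
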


Proposition~\ref{prop:NE_LC} states that there always exists a Nash equilibrium under which no agents are marginalized. 

The next result provides characterization of a Nash equilibrium with marginalized agents.
\begin{prop}
\label{prop:NE_ld}
Let the functions $f$ and $g$ be as given in Assumption~\ref{assumption:simple_fg}. Then the community structure $\mathcal{C}(L^*_C,l^*_d)$ with 
$$0 < l_d^* < L_C^*, \ \ \ \mathrm{ and }  \ \ \ L_C^* = \frac{L}{N}$$
where $N \geq 2$ is an integer, 
is a Nash equilibrium
if, and only if,
$$l_d^* =  \frac{f_0}{a} - \frac{c}{a g_0}.$$
\end{prop}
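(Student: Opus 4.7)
The plan is to reduce the Nash condition on $\mathcal{C}(L_C^*, l_d^*)$ to a single indifference equation at the boundary of $C_d^k$. First I would determine producer behaviour inside each community. Because $g \equiv g_0$ is constant, every producer $y \in C_s^k$ faces the identical optimisation $\xs(y) = \argmax_{x} P_{C^k}(x)$; and because $C_d^k$ is symmetric around $m_k$ with uniform allocation $\alpha_{C^k}(\cdot) = E_p$ on $C_d^k$, while $p(\cdot|z)$ is the symmetric tent of Assumption~\ref{assumption:simple_fg}, the function $P_{C^k}$ is symmetric and unimodal about $m_k$. Hence $\xs(y) = m_k$ for all $y \in C_s^k$, so $Q_{C^k}$ and $\beta_{C^k}$ reduce to delta masses at $m_k$ of total weight $2 L_C^* E_q g_0$ and $2 L_C^* E_q$, respectively. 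Substituting into the definition of $U_{C^k}^{(d)}$ yields the closed form
\[
U_{C^k}^{(d)}(y) \;=\; 2 L_C^* E_p E_q \bigl[\,g_0\, f(d(m_k, y)) - c\,\bigr]
\]
for any agent $y$ who allocates $\alpha_{C^k}(y) = E_p$.

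The ``only if'' direction then follows from two complementary deviations. If $l_d^* > f_0/a - c/(ag_0)$, continuity of $f$ implies that consumers near the right boundary of $C_d^k$ obtain strictly negative utility, so they would prefer $\alpha_{C^k}(y) = 0$, contradicting Nash. Conversely, if $l_d^* < f_0/a - c/(ag_0)$, then the agent $y = m_k + l_d^* \in D^k$ obtains strictly positive utility by joining $C^k$, again contradicting Nash. These two forced inequalities give the claimed equality $l_d^* = f_0/a - c/(ag_0)$.

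For the ``if'' direction at the prescribed $l_d^*$ I would verify that no remaining unilateral deviation is profitable. Consumers in $C_d^k$ enjoy non-negative utility, and any allocation to a more distant community $C^{k'}$ is strictly dominated by the symmetry of the structure, so $\alpha_{C^k}(y) = E_p$ is optimal. Marginalised agents in $D^k$ lie at distance at least $l_d^*$ from both neighbouring centres, and hence by the closed form above obtain non-positive utility from joining any community; the zero allocation is therefore a best response. Producers are handled by the companion calculation
\[
U_{C^k}^{(s)}(y) \;=\; 2 E_p E_q\, l_d^* \bigl[\,g_0(f_0 - a l_d^*/2) - c\,\bigr],
\]
which at the prescribed $l_d^*$ simplifies, using $g_0 f_0 > c$, to $E_p E_q\, l_d^*\, (g_0 f_0 - c) > 0$; optimality of $\xs(y) = m_k$ was established above, and by symmetry reallocating production to a neighbouring community is at best equally attractive. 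The main obstacle in executing this plan is the producer-side bookkeeping: the constant form of $g$ creates genuine ties between $C^k$ and $C^{k+1}$ that could in principle permit a producer to split production, but the symmetry of the structure shows that none of these alternatives strictly improves utility, preserving the prescribed allocation as a valid best response.
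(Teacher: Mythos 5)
Your proposal is correct, but note that the paper does not actually prove Proposition~\ref{prop:NE_ld}: it imports the result from~\cite{carrington2019community} and records only its ingredients, namely Lemma~\ref{lemma:xs} (that $\xs(y)=m_k$ for all producers) and the closed-form utilities~\eqref{prod_cons_intervals_consumption}--\eqref{prod_cons_intervals_production} in Appendix~\ref{appendix:utilities}. Your derivation reproduces exactly those ingredients and completes the boundary-indifference argument they are designed to support (including the correct handling of the producer-side ties caused by $g\equiv g_0$), so it matches the intended proof.
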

Note that Proposition~\ref{prop:NE_ld} that there always exists a  community structure $\mathcal{C}(L^*_C,l^*_d)$ that is a Nash equilibrium with marginalized agents if we have that
$$L > 2 \left ( \frac{f_0}{a} - \frac{c}{a g_0} \right ),$$
i.e. if the content space $(-L,L]$ is large enough.



\section{Stable  and Neutral-Stable Nash Equilibria  $\mathcal{C}(L^*_C,l^*_d)$}\label{sec:model}

The results from~\cite{carrington2019community} presented in the previous section show that there always exists a Nash equilibrium  $\mathcal{C}(L^*_C,l^*_d=L^*_C)$ under which no agents are marginalized. Furthermore, it shows that if the content space $(-L,L]$ is large enough then there always exists a Nash equilibrium  $\mathcal{C}(L^*_C,l^*_d)$, $0 < l^*_d < L^*_C$  under which some agents are marginalized.

The goal of this paper is to characterize which of the Nash equilibria  $\mathcal{C}(L^*_C,l^*_d)$ obtained in the previous section are stable, i.e. robust to small perturbations to the community structure. This question is motivated by the following observation. As over time agents may join, or leave, communities in a social network, community structure is not static but changes over time. For this situation, we are interested in studying whether a community structure is stable in the sense that it is robust to small changes (perturbations) to the structure.

In order to study this question, we use the following approach. For a given  Nash equilibrium $\mathcal{C}(L^*_C,l^*_d)$, we consider two adjacent communities $C^1 = (C^1_d,C^1_s)$ and  $C^2 = (C^2_d,C^2_s)$. Without loss of generality we assume that the two communities are given by
\begin{equation}\label{eq:C_1_d}
  C^1_d = [-L_C - l^*_d, -L_C + l^*_d)
\end{equation}    
and 
\begin{equation}\label{eq:C_2_d}
  C^2_d = [L_C - l^*_d, L_C + l^*_d),
\end{equation}
as well as
\begin{equation}\label{eq:C_1_s}
  C^1_s = [-2L_C, 0)
\end{equation}
and
\begin{equation}\label{eq:C_2_s}
  C^2_s = [0,2L_C).
\end{equation}
Note that by the definition of the community structure $\mathcal{C}(L_C,l^*_d)$ given in Section~\ref{sec:model}, there always exist at least two communities in a Nash equilibrium $\mathcal{C}(L^*_C,l^*_d)$.

We then perturb the ``boundary'' between the two communities by a small amount, and study the dynamics of the boundaries between the two communities after this perturbation. In particular we study whether over time the perturbation vanishes and the original community structure is again obtained. If the perturbation vanishes over time, then this suggests that the Nash equilibrium is stable in the sense that the Nash equilibrium  is resistant to (local) perturbations. On the other hand if the   communities  are no restored after the perturbation, then this suggests that the Nash equilibrium is not  stable.

More precisely, we consider the following perturbation model to study the stability of a Nash equilibrium.
For $t=0$ we set
$$C^1_d(t=0) = [-L_C - l^*_d, -L_C + l^*_d + \delta_{dl}(0) )$$
and
$$C^2_d(t=0) = [L_C - l^*_d + \delta_{dr}(0), L_C + l^*_d),$$
as well as
$$C^1_s(t=0) = [-2L_C, \delta_{sl}(0)) 
\; \mbox{ and } \;
C^2_{s}(t=0) = [\delta_{sr}(0), 2L_C),$$   
where
$$ \delta_{dl}(0) = \delta_{dr}(0), \quad \mbox{ if } l^*_d = L_C$$
and 
$$\delta_{dl}(0) < 2L_C - 2l^*_d +  \delta_{dr}(0), \quad \mbox{ if } l^*_d < L_C,$$
as well as
$\delta_{sl}(0) = \delta_{sr}(0)$.

We analyze the dynamics of the boundaries between the two communities after the initial perturbation, i.e. we characterize the trajectory of the perturbation  $\delta_{dl}(t)$, $\delta_{dr}(t)$, $\delta_{sl}(t)$, and  $\delta_{sr}(t)$, over time $t \geq 0$. The intuition behind the dynamics of the boundaries is as follows.  Note that the community boundaries represent the agents at the border between the two communities. We refer to these agents as the border agents. We then assume that a border agent will  join the community which provides the highest utility, given that the highest utility is non-negative. If the highest utility is negative, then the border agent will leave the community and not join any community (and obtain a utility equal to 0).

Using this intuition, we then assume that the rate at which the boundaries move is given by the difference between the utilities in the two communities. That is, the higher the difference of the two utilities, the higher the rate (the faster) with which  border agents move from one community to the other. The corresponding differential equations for the dynamics of the boundaries for $t \geq 0$ are then given by
\begin{equation}
\label{eqn:dl_deri}
\frac{ d \delta_{dl}(t)}{dt}=  U_{C_1(t)}^{(d)} ( -L_C + l^*_d + \delta_{dl}(t) ) - \max \left \{ 0,  U_{C_2(t)}^{(d)} ( -L_C + l^*_d + \delta_{dl}(t)) \right \}
\end{equation}
and
\begin{equation}
\label{eqn:dr_deri}
\frac{ d \delta_{dr}(t)}{dt}=  \max \left \{ 0, U_{C_1(t)}^{(d)} ( L_C - l^*_d + \delta_{dr}(t) ) \right \} - U_{C_2(t)}^{(d)} ( L_C - l^*_d + \delta_{dr}(t) )
\end{equation}
Similarly, we have that
\begin{equation}
\label{eqn:sl_deri}
\frac{ d \delta_{sl}(t)}{dt}=  U_{C_1(t)}^{(s)} ( -L_C + l^*_d + \delta_{sl}(t) ) - \max \left \{ 0,  U_{C_2(t)}^{(s)} ( -L_C + l^*_d + \delta_{sl}(t) ) \right \}
\end{equation}
and
\begin{equation}
\label{eqn:sr_deri}
\frac{ d \delta_{sr}(t)}{dt}=  \max \left \{ 0, U_{C_1(t)}^{(s)} ( L_C - l^*_d + \delta_{sr}(t) ) \right \} - U_{C_2(t)}^{(s)} ( L_C - l^*_d + \delta_{sr}(t) ).
\end{equation}

Note that in the above differential equation the border agent will never move to a community that provides a negative utility.

Using this model, we say that the Nash equilibrium is stable if the following is true.

\begin{definition}\label{def:stable}
Let  $\mathcal{C}(L^*_C,l^*_d)$ be a given Nash equilibrium, and let  $C_1 = (C^1_d,C^1_s)$ and  $C_2 = (C^2_d,C^2_s)$  be two communities in  $\mathcal{C}(L^*_C,l^*_d)$ as given by Eq.~\eqref{eq:C_1_d}~-~\eqref{eq:C_2_s}. 

 We say that the Nash equilibrium $\mathcal{C}(L^*_C,l^*_d)$ is stable if there exists a $\delta > 0$ such that for
$$ 0 < \delta_{dl}(t=0),  \delta_{dr}(t=0), \delta_{sl}(t=0), \delta_{sr}(t=0) \leq \delta$$
we have for the differential equations given by Eq.~\eqref{eqn:dl_deri}~-~\eqref{eqn:sr_deri} that
$$  0 \leq \delta_{dl}(t),  \delta_{dr}(t), \delta_{sl}(t), \delta_{sr}(t) \leq \delta, \qquad t \geq 0,$$
and
$$ \lim_{t \to \infty}  \delta_{dl}(t) =  \lim_{t \to \infty}  \delta_{dr}(t) =  \lim_{t \to \infty}  \delta_{sl}(t) =  \lim_{t \to \infty}  \delta_{sr}(t) = 0.$$
\end{definition}
This definition captures the intuition that under a stable Nash equilibrium we have that small perturbations to the community boundaries will vanish (become equal to 0) over time.

In addition, we use a weaker notion of stability to which we refer to as a neutral-stable Nash equilibrium. To define a neutral-stable Nash equilibrium, we use the following notation.

Let  $C_1(t) = (C^1_d(t),C^1_s(t))$ and  $C_2(t) = (C^2_d(t),C^2_s(t))$, be the structure of two communities under the above perturbation model at time $t$.  For an agent $y \in C^1_s(t) \cup C^2_s(t)$, let
$$\xs_1(y,t) = \argmax_{x \in \mathcal{R}} q(x|y) \int_{z \in C^1_d(t)} p(x|z)dz$$
be the optimal content for agent $y$ to produce in community $C_1(t)$, and let
$$\xs_2(y,t) = \argmax_{x \in \mathcal{R}} q(x|y) \int_{z \in C^2_d(t)} p(x|z)dz$$
be the optimal content for agent $y$ to produce in community $C_2(t)$.
Using this definition, let the utilities of agents $y \in C^1_d(t) \cup C^2_d(t)$ be given as follows,
$$U_{C_1(t)}^{(d)}(y) = E_p E_q \int_{z \in C^1_s(t)} \Bsbl q(\xs_1(z,t)|z)p(\xs_1(z,t)|y) - c \Bsbr dz$$
and
$$U_{C_2(t)}^{(d)}(y) = E_p E_q \int_{z \in C^2_s(t)} \Bsbl q(\xs_2(z,t)|z)p(\xs_2(z,t)|y) - c \Bsbr dz.$$
Similarly,  let the utilities of agents $y \in C^1_s(t) \cup C^2_s(t)$ be given as follows,
$$U_{C_1(t)}^{(s)}(y) = E_p E_q \int_{z \in C^1_d(t)} \Bsbl q(\xs_1(y,t)|y)p(\xs_1(y,t)|z) - c \Bsbr dz$$
and
$$U_{C_2(t)}^{(s)}(y) = E_p E_q \int_{z \in C^2_d(t)} \Bsbl q(\xs_2(y,t)|y)p(\xs_2(y,t)|z) - c \Bsbr dz.$$

\begin{definition}\label{def:neutral-stable}
Let  $\mathcal{C}(L^*_C,l^*_d)$ be a given Nash equilibrium, and let  $C_1 = (C^1_d,C^1_s)$ and  $C_2 = (C^2_d,C^2_s)$ be two communities in  $\mathcal{C}(L^*_C,l^*_d)$ as given by Eq.~\eqref{eq:C_1_d}~-~\eqref{eq:C_2_s}. 
  We say that the Nash equilibrium $\mathcal{C}(L^*_C,l^*_d)$ is neutral-stable if one of the following is true for the differential equations given by Eq.~\eqref{eqn:dl_deri}~-~\eqref{eqn:sr_deri}.
\begin{enumerate}  
\item There exists a $\delta > 0$ such that for
$$ 0 < \delta_{dl}(t=0),  \delta_{dr}(t=0), \delta_{sl}(t=0), \delta_{sr}(t=0) \leq \delta$$
we have 
$$  0 \leq \delta_{dl}(t),  \delta_{dr}(t) \leq \delta, \qquad t \geq 0,
\; \mbox{ and }
 \lim_{t \to \infty}  \delta_{dl}(t) =  \lim_{t \to \infty}  \delta_{dr}(t) = 0,$$
as well as
$$ \lim_{t \to \infty} U_{C_1(t)}^{(s)}(y) =  \lim_{t \to \infty}  U_{C_2(t)}^{(s)}(y) > 0, \qquad y \in C^1_s \cup  C^2_s.$$
\item There exists a $\delta > 0$ such that for
$$ 0 < \delta_{dl}(t=0),  \delta_{dr}(t=0), \delta_{sl}(t=0), \delta_{sr}(t=0) \leq \delta$$
we have 
$$  0 \leq \delta_{sl}(t), \delta_{sr}(t) \leq \delta, \qquad t \geq 0,
\; \mbox{ and } \;
 \lim_{t \to \infty}  \delta_{sl}(t) =  \lim_{t \to \infty}  \delta_{sr}(t) = 0,$$
as well as
$$ \lim_{t \to \infty} U_{C_1(t)}^{(d)}(y) =  \lim_{t \to \infty}  U_{C_2(t)}^{(d)}(y) > 0, \qquad y \in C^1_d \cup  C^2_d.$$
\end{enumerate}
\end{definition}
This definition captures the case where either the content producers, or content consumers, are neutral (indifferent) regarding which community to join as they 
obtain the same utility rate in both communities.



\section{Results}
\label{section:analysis}

We obtain the following results for the perturbation model of Section~\ref{sec:model}.
The first result states that (almost) all Nash equilibrium $\mathcal{C}(L^*_C,\lsd = L^*_C)$ as given by Proposition~\ref{prop:NE_LC} under which no agents are marginalized, are not stable.
\begin{proposition}
\label{prop:not_stable}
All Nash equilibria  $\mathcal{C}(L^*_C,\lsd = L^*_C)$ with
$L^*_C <  \left ( \frac{f_0}{a} - \frac{c}{a g_0} \right )$
are neither  stable nor neutral-stable.
\end{proposition}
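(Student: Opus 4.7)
The plan is to exploit the mirror symmetry between the adjacent communities $C^1$ and $C^2$, together with the fact that $g \equiv g_0$ is constant, in order to reduce the four perturbation variables to two, and then to linearize the reduced dynamics around the equilibrium. When $\lsd = \LsC$ the perturbation model already forces $\delta_{dl}(0) = \delta_{dr}(0) =: \delta_d(0)$ and $\delta_{sl}(0) = \delta_{sr}(0) =: \delta_s(0)$. Under the mirror symmetry of the two communities about their shared boundary, and using that with $g$ constant the optimal production point $\xs_j(t)$ is simply the midpoint of the consumer interval $C^j_d(t)$ (independent of the producer $y$), these equalities are preserved by \eqref{eqn:dl_deri}--\eqref{eqn:sr_deri}. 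Writing $\delta_d(t)$ and $\delta_s(t)$ for the common values, the four-dimensional system collapses to a two-dimensional ODE in $(\delta_d,\delta_s)$.

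Second, with $f$ linear and $g \equiv g_0$ one computes in closed form $\xs_1(t) = -\LsC + \delta_d(t)/2$ and $\xs_2(t) = \LsC + \delta_d(t)/2$, and then evaluates the four utilities appearing in \eqref{eqn:dl_deri}--\eqref{eqn:sr_deri} at the border agents. Under the hypothesis $\LsC < f_0/a - c/(ag_0)$ the unperturbed values of all four utilities are strictly positive, so for sufficiently small perturbations the $\max$ operators in \eqref{eqn:dl_deri}--\eqref{eqn:sr_deri} drop out. A direct first-order expansion in $\delta_d$ and $\delta_s$ then gives
\[
\begin{pmatrix} \dot\delta_d \\ \dot\delta_s \end{pmatrix}
= 2 E_p E_q
\begin{pmatrix} -g_0 a \LsC & k \\ k & 0 \end{pmatrix}
\begin{pmatrix} \delta_d \\ \delta_s \end{pmatrix},
\qquad k := g_0(f_0 - a \LsC) - c,
\]
and the hypothesis is exactly the statement $k > 0$.

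Finally, the coefficient matrix has determinant $-4 E_p^2 E_q^2 k^2 < 0$, hence its eigenvalues $\lambda_\pm$ are real with $\lambda_+ > 0 > \lambda_-$ (the origin is a saddle). A quick calculation shows the unstable eigenvector $v_+$ lies in the open first quadrant, so every initial condition $(\delta_d(0),\delta_s(0)) \in (0,\delta]^2$ has a strictly positive projection onto $v_+$, and therefore both $\delta_d(t)$ and $\delta_s(t)$ grow like $e^{\lambda_+ t}$ until they exit the linearization regime. This contradicts stability (Definition~\ref{def:stable} requires both perturbations to tend to $0$) and both branches of neutral stability (Definition~\ref{def:neutral-stable} requires one of the two to tend to $0$). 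The main technical obstacle will be the closed-form integration of the four utilities and the bookkeeping needed to verify that the $\max$ operators remain inactive uniformly for small enough initial data on the transient during which the linearization is accurate.
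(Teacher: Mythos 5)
Your proposal is correct and follows the paper's reduction almost exactly: the paper's Lemma~\ref{lemma:dt_no_gap} performs the same symmetry collapse $\delta_{dl}=\delta_{dr}=\delta_d$, $\delta_{sl}=\delta_{sr}=\delta_s$ and arrives at precisely your coefficient matrix, since $K=2E_pE_q\,k$ and $M=2E_pE_q\,g_0aL^*_C$ in the paper's notation, with $K>0$ being exactly your condition $k>0$. Two remarks on where you diverge. First, you treat the $2\times 2$ system as a \emph{linearization}, but with $f$ piecewise linear and $g\equiv g_0$ the cross terms in $U^{(d)}_{C_1}-U^{(d)}_{C_2}$ cancel identically, so the dynamics are \emph{exactly} linear on the whole region where the $\max$ operators are inactive; the paper's Lemmas~\ref{lemma:dt_close_1}, \ref{lemma:dt_close_2} and \ref{lemma:Ut_no_gap} show that this region is forward-invariant for small initial data, which disposes of the ``transient/bookkeeping'' obstacle you flag at the end --- there is no regime to exit for the purpose of ruling out convergence to $0$. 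Second, your finishing move (determinant $-4E_p^2E_q^2k^2<0$, saddle, unstable eigenvector in the open first quadrant, positive projection of any initial condition in $(0,\delta]^2$) is a genuinely different and more standard argument than the paper's, which avoids eigenvalues entirely: it picks $\epsilon_d,\epsilon_s$ with $K\epsilon_s-M\epsilon_d>0$ and shows by a continuity/contradiction argument that $\delta_d(t)$ can never fall to $\epsilon_d$ (since $\delta_s$ is nondecreasing while $\delta_d\ge\epsilon_d$), hence neither perturbation tends to $0$. Your route is shorter and identifies the growth rate $\lambda_+$; the paper's is more elementary and sidesteps the small gap in your sketch, namely that ``grows like $e^{\lambda_+t}$'' is an asymptotic statement and one should also check the trajectory cannot leave the first quadrant (it cannot, because $\dot\delta_d=K\delta_s>0$ whenever $\delta_d=0$ and $\delta_s>0$ --- which is, in effect, the paper's monotonicity argument). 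Either way the conclusion correctly defeats Definition~\ref{def:stable} and both branches of Definition~\ref{def:neutral-stable}.
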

We provide a proof of Proposition~\ref{prop:not_stable} in Appendix~\ref{prop:not_stable}.

The next result states that all Nash equilibria $\mathcal{C}(L^*_C,l^*_d)$ as given by Proposition~\ref{prop:NE_ld} under which some agents are  marginalized, are neutral-stable. 
\begin{proposition}
\label{prop:stable}
All Nash equilibria  $\mathcal{C}(L^*_C,\lsd)$ with
$l^*_d = \left ( \frac{f_0}{a} - \frac{c}{a g_0} \right ) < L^*_C$
as given by Proposition~\ref{prop:NE_ld}  are neutral-stable.
\end{proposition}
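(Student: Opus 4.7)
The plan is to establish the first condition of Definition~\ref{def:neutral-stable}, namely that the consumer-boundary perturbations $\delta_{dl}(t)$ and $\delta_{dr}(t)$ vanish as $t\to\infty$ while the producer-side utilities in both communities converge to a common positive value. The starting observation is that, by Proposition~\ref{prop:NE_ld}, $l^*_d = \frac{f_0}{a} - \frac{c}{a g_0}$ is precisely the distance from a community center at which a consumer derives zero utility per unit of optimally produced content; this is what will make the consumer side self-correcting when it is displaced by a small amount.

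First I would compute the utilities at the two consumer-boundary agents $y_1 = -L_C + l^*_d + \delta_{dl}(t)$ and $y_2 = L_C - l^*_d + \delta_{dr}(t)$. Since $g\equiv g_0$ under Assumption~\ref{assumption:simple_fg}, the optimal production content in each community is independent of the producer and equals the midpoint of the corresponding $C^i_d(t)$, so $x^*_1 = -L_C + \delta_{dl}(t)/2$ and $x^*_2 = L_C + \delta_{dr}(t)/2$. Substituting $|x^*_1 - y_1| = l^*_d + \delta_{dl}(t)/2$ into $U^{(d)}_{C_1(t)}(y_1)$ and using the identity $g_0 f_0 - a g_0 l^*_d - c = 0$ yields
\begin{equation*}
U^{(d)}_{C_1(t)}(y_1) \;=\; -\tfrac{1}{2}\, a g_0\, E_p E_q \bigl(2 L_C + \delta_{sl}(t)\bigr)\, \delta_{dl}(t),
\end{equation*}
which is strictly negative for $\delta_{dl}(t)>0$. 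For $\delta$ sufficiently small one also has $|x^*_2 - y_1| \geq l^*_d$, which by the very definition of $l^*_d$ implies $U^{(d)}_{C_2(t)}(y_1)\leq 0$, so the $\max$ in Eq.~\eqref{eqn:dl_deri} vanishes. Hence $\dot\delta_{dl}$ is linear in $\delta_{dl}$ with an order-one negative coefficient, and the symmetric computation gives $\dot\delta_{dr} = -\tfrac{1}{2} a g_0 E_p E_q(2L_C - \delta_{sr}(t))\delta_{dr}(t)$. Both $\delta_{dl}(t)$ and $\delta_{dr}(t)$ therefore decay exponentially to $0$ while remaining in $[0,\delta]$.

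Next I would handle the producer side. Because $g$ is constant and $x^*_i$ depends only on $C^i_d(t)$, the utility $U^{(s)}_{C_i(t)}(y)$ is independent of $y$ and reduces to $E_p E_q\, h(L_i(t))$, where $L_i(t)$ is the half-width of $C^i_d(t)$ and
\[
h(L) \;=\; 2L(g_0 f_0 - c) - a g_0 L^2.
\]
A direct computation gives $h'(l^*_d)=0$ and $h(l^*_d) = l^*_d(g_0 f_0 - c) > 0$, so $l^*_d$ is a strict interior maximum of $h$. Two consequences follow: (i) the difference $h(L_1(t)) - h(L_2(t))$ is quadratic in the deviations $L_i(t) - l^*_d$, hence of order $\delta_{dl}(t)^2 + \delta_{dr}(t)^2$; and (ii) both $U^{(s)}_{C_i(t)}$ converge to the strictly positive limit $E_p E_q\cdot l^*_d(g_0 f_0 - c)$ as $\delta_{dl},\delta_{dr}\to 0$. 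From (i), the right-hand sides of Eqs.~\eqref{eqn:sl_deri}--\eqref{eqn:sr_deri} are bounded by $O(\delta^2 e^{-2\lambda t})$ for the exponential rate $\lambda$ of the consumer-side decay, hence integrable in $t$; so $\delta_{sl}(t)$ and $\delta_{sr}(t)$ converge to finite limits and stay bounded. Combined with (ii), this verifies condition~(1) of Definition~\ref{def:neutral-stable}.

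The main obstacle I anticipate is the two-way coupling between the four boundary variables: the sign estimate for $\dot\delta_{dl},\dot\delta_{dr}$ implicitly assumes that $\delta_{sl}(t),\delta_{sr}(t)$ remain in a small neighbourhood of zero (so that distances such as $|x^*_2 - y_1|$ stay above $l^*_d$), while the quadratic bound on $\dot\delta_{sl},\dot\delta_{sr}$ relies on $\delta_{dl}(t),\delta_{dr}(t)$ being small. I would close the loop with a standard bootstrap argument on a maximal interval on which all four perturbations stay below some $\delta' = O(\delta)$: the exponential decay of $\delta_{dl},\delta_{dr}$ on that interval forces the total drift of $\delta_{sl},\delta_{sr}$ to be only $O(\delta^2)$, which is strictly less than $\delta'$ for $\delta$ small enough, so the bootstrap closes and the estimates extend to all $t\geq 0$. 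Note that the neutrality of the limit (producer utilities equal to each other rather than strictly separated) is an immediate consequence of $h'(l^*_d)=0$, and is the reason why only neutral-stability, and not strict stability, can be obtained here.
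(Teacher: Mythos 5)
Your proposal is correct and follows essentially the same route as the paper: exponential decay of $\delta_{dl},\delta_{dr}$ because the boundary consumer's own-community utility becomes $-\tfrac{1}{2}ag_0E_pE_q(2L^*_C+\delta_{sl})\delta_{dl}$ past $l^*_d$ while the other community's utility is non-positive (the paper's Lemmas~\ref{lemma:ddl_0_1}--\ref{lemma:ddr_0_2}), a quadratic bound on the producer-side drift from the first-order optimality of $l^*_d$ (Lemma~\ref{lemma:dt_s_gap}), and convergence of both producer utilities to the common positive limit $E_pE_q\,l^*_d(f_0g_0-c)$ (Lemma~\ref{lemma:neutral}). Your explicit maximal-interval bootstrap to close the coupling between the consumer and producer perturbations is the rigorous form of the combination the paper performs in Appendix~E, so the argument is sound.
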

We provide a proof of Proposition~\ref{prop:not_stable} in Appendix~\ref{prop:stable}.

The above results state that Nash equilibria with marginalized agents are (neutral) stable, but  (almost) all Nash equilibria with no marginalized agents are not stable.





\section{Conclusions}
\label{section:conclusion}

We have studied social exclusion in social networks using a game-theoretic framework. In particular, we asked and analyzed  the question whether social exclusion might be a structural property of communities in social networks.
The results obtained under the model considered in this paper show that all (neutral) stable Nash equilibria are Nash equilibria with social exclusion.  This is quite a striking result as it suggests that having marginalized individuals (i.e. social exclusion) is the ``norm'' in social networks, rather than an anomaly or exception. In this sense, the obtained results provide a new understanding of social exclusion, where rather than asking ``who is doing the exclusion'' the correct question to ask is how do we deal with social exclusion as a systematic property of social networks (society)?

We note that the results in this paper were obtained under a particular game-theoretic model. In that sense, rather than providing a definite ``proof'' that having marginalized individuals (i.e. social exclusion) is the ``norm'' in social networks, rather than an anomaly or exception, the results that we obtain is a first evidence that suggests that this might be the case. Given the potential impact and implications of this result in our understanding of social exclusion, important and interesting follow-up research is to verify whether these results a)  are true under more general models than the one considered in this paper,  and
b) can be verified/observed in real-life case studies?
We discuss possible directions to study these two research questions in more details below.  

The results obtained in this paper were for a particular case where the functions $f$ and $g$ are as given by Assumption~\ref{assumption:simple_fg}. A natural question to ask is whether these results extend to more general functions $f$ and $g$.  An extension of the formal analysis to more general functions $f$ and $g$ seems challenging and it is not clear whether it can be done. As a result, one might need to resort to numerical case studies to investigate this question. We carried out such numerical case studies for more general functions, and the initial results that we obtained suggest that the results indeed carry over to more general settings.

An interesting and important aspect of the results obtained in this paper is whether they indeed provide the correct insight into the process of how social exclusion occurs in real-life social networks and communities. One potential avenue for exploring this question could be using the effect of globalization on social exclusion. This topic has been extensively studied and documented~\cite{who}. In particular, the work by Beall~\cite{beall} provides a concrete study of the influence of globalization on local communities in Pakistan and South Africa. An interesting question is whether the empirical results in~\cite{beall} could be explained by, and match, the model-based results obtained in this paper.

Finally, an interesting direction for future research is to study whether the models used for the analysis in this paper  could  be used to design (social) policies to re-integrate marginalized individuals into a community/society.

\newpage

\bibliographystyle{splncs04}
\bibliography{paper}

\begin{thebibliography}{1}
\providecommand{\url}[1]{\texttt{#1}}
\providecommand{\urlprefix}{URL }
\providecommand{\doi}[1]{https://doi.org/#1}

\bibitem{aktinson}
Aktinson, A.B.: Social exclusion, poverty and unemployment. In: Aktinson, A.B
  \&~Hills, J. (ed.) Exclusion, Employment and Opportunity. CASE paper 4.
  London School of Economics (1998)

\bibitem{balcan}
Balcan, B., Braverman, M.: Nash equilibria in perturbation-stable games  (2017)

\bibitem{beall}
Beall, J.: Globalization and social exclusion in cities: framing the debate
  with lessons from africa and asia. Environment \& Urbanization
  \textbf{14}(1),  41--51 (2002)

\bibitem{candogan}
Candogan, O., Ozdaglar, A., Parrilo, P.A.: Near-potential games: Geometry and
  dynamics. ACM Transactions on Economics and Computation  \textbf{1}(2) (2013)

\bibitem{carrington2019community}
Carrington, M., Marbach, P.: Community structures in information networks. In:
  International Conference on Game Theory for Networks. pp. 119--127. Springer
  (2019)

\bibitem{fudenberg}
Fudenberg, D., Kreps, D.M.: Learning mixed equilibria. Games and Economic
  Behavior (5),  320--367 (1993)

\bibitem{hofbauer}
Hofbauer, J., Sandholm, W.: On the global convergence of stochastic fictitious
  play. Econometrica  \textbf{70}(6),  2265--2294 (2002)

\bibitem{who}
Mathieson, J., Popay, J., Enoch, E., Escorel, S., Hernández, M., Johnston, H.,
  Rispel, L.: Social exclusion: Meaning, measurement and experience and links
  to health inequalities - a review of literature. WHO Social Exclusion
  Knowledge Network  (2008)

\bibitem{jackson}
O.Jackson, M., Rodriguez-Barraque, T.: Epsilon-equilibria of perturbed games.
  Games and Economic Behavior  \textbf{75}(1),  198--216 (2012)

\end{thebibliography}


\newpage
\appendix
\section{Properties of the Utilities for the Communities $C_1(t)$ and $C_2(t)$}
\label{appendix:utilities}
In this appendix we provide additional properties  of a Nash equilibrium  $\mathcal{C}(L^*_C,l^*_d)$ as given by Proposition~\ref{prop:NE_LC} and Proposition~\ref{prop:NE_ld}, as well as the utility functions of agents in the communities $C_1(t)$ and $C_2(t)$, $t \geq 0$,  as defined in Section~\ref{sec:model}.

Let $\mathcal{C}(L^*_C,l^*_d)$ be a Nash equilibrium as given by Proposition~\ref{prop:NE_LC} and Proposition~\ref{prop:NE_ld}, and let $C = (C_d, C_s)$ be  a community under this Nash equilibrium. From Section~\ref{section:background} we then have that the consumption utility of an agent  $y \in C_d$ is  given by
\begin{equation}
U_C^{(d)} (y) = E_p E_q  \int_{z \in C_s} p(\xs(z)|y)q(\xs(z)|z) dz 
- E_p E_q c \int_{z \in C_s} dz
\label{prod_cons_intervals_consumption}
\end{equation}
and the production utility of an agent $y \in C_s$ is given by
\begin{equation}
U_C^{(s)}(y) = E_p E_q q(\xs(y)|y) \int_{z \in C_d} p(\xs(y)|z)dz - E_q E_q c \int_{z \in C_d}dz
\label{prod_cons_intervals_production},
\end{equation}
where
$$ \xs(y) = \argmax_{x \in \mathcal{R}} q(x|y) \int_{z \in C_d} p(x|z) dz.$$

The following result from~\cite{carrington2019community} characterizes the content $\xs(y)$ that an agent $y$ produces under a  Nash equilibrium as given by Proposition~\ref{prop:NE_LC} and Proposition~\ref{prop:NE_ld}.

\begin{lemma}\label{lemma:xs}
Let $\mathcal{C}(L^*_C,l^*_d)$ be a Nash equilibrium as given by Proposition~\ref{prop:NE_LC} and Proposition~\ref{prop:NE_ld}, and let $C = (C_d, C_s)$ given by
$$C_d = [y_0-l^*_d,y_0+l^*_d)$$
and
$$C_s = [y_0-L^*_C,y_0+L^*_C),$$
be  a community under this Nash equilibrium. Then we have for agent $y \in C_s$ that
$$\xs(y) = y_0 = \argmax_{x \in \mathcal{R}} q(x|y) \int_{z \in C_d} p(x|z) dz.$$
\end{lemma}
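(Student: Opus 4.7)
The plan is to exploit the very special structure imposed by Assumption~\ref{assumption:simple_fg}: since $g \equiv g_0$, we have $q(x|y) = g_0$ for all $x,y \in \mathcal{R}$. Consequently $q(x|y)$ is a positive constant in the objective, so
$$\xs(y) = \argmax_{x \in \mathcal{R}} q(x|y) \int_{z \in C_d} p(x|z)\,dz = \argmax_{x \in \mathcal{R}} I(x), \qquad I(x) := \int_{z \in C_d} f(d(x,z))\,dz.$$
This already shows that $\xs(y)$ is the same for every agent $y \in C_s$, and the remaining task is to prove that this common maximizer equals the center $y_0$ of $C_d$.

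Next I would translate coordinates so that $y_0 = 0$; by translation invariance of the torus metric this does not affect the maximization, and reduces the problem to showing that $I(x) = \int_{-\lsd}^{\lsd} h(x-u)\,du$ is maximized at $x = 0$, where $h(v) := f(d(v,0))$ is even and non-increasing in $|v|$. The change of variable $v = x - u$ yields the moving-window representation $I(x) = \int_{x-\lsd}^{x+\lsd} h(v)\,dv$, whose derivative (away from the kinks of $h$) is $I'(x) = h(x+\lsd) - h(x-\lsd)$. At $x=0$ this vanishes by evenness of $h$, and for $x > 0$ small we have $h(x-\lsd) = h(\lsd-x)$ with $\lsd - x < x + \lsd$, so the non-increasing property of $h$ yields $I'(x) \le 0$. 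Uniqueness of the maximizer follows from strict monotonicity of $f$ at $\lsd$, which holds because $\lsd \le f_0/a - c/(ag_0) < f_0/a$ under Propositions~\ref{prop:NE_LC} and~\ref{prop:NE_ld}.

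The main obstacle will be verifying that no alternative maximizer hides on the opposite side of the torus, near $x = \pm L$, where the torus metric wraps around $C_d$ and the simple derivative argument above no longer applies directly. I would handle this by exploiting the $2L$-periodicity of $I$ together with the bounded support of $f$: using the symmetry $I(x) = I(-x)$ and a direct computation at the antipode, $I(\pm L) = 2\int_{L-\lsd}^{L} f(v)\,dv \le 2\int_{0}^{\lsd} f(v)\,dv = I(0)$, since $f$ is non-increasing. Combining this with the inequality $I'(x) \le 0$ on $(0, L)$ obtained above rules out any competing maximizer, and with the strict-monotonicity observation of the preceding paragraph completes the proof that $\xs(y) = y_0$.
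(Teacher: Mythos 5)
The paper gives no proof of Lemma~\ref{lemma:xs}: it is imported verbatim from~\cite{carrington2019community}, so there is no in-paper argument to compare yours against. Your proposal is nevertheless a correct, self-contained derivation: the observation that $q\equiv g_0$ under Assumption~\ref{assumption:simple_fg} reduces the problem to maximizing the moving-window integral $I(x)=\int_{x-\lsd}^{x+\lsd}h(v)\,dv$ of an even, non-increasing kernel, the sign analysis of $I'(x)=h(x+\lsd)-h(x-\lsd)$ on $(0,L)$ together with the antipodal check, and the strictness coming from $\lsd<f_0/a$ are exactly the ingredients one needs, and this is the natural argument the cited reference relies on. The only point worth tightening in a full write-up is the wrap-around case $x+\lsd>L$, where you should note explicitly that $h(x+\lsd)=f(2L-x-\lsd)\le f(x-\lsd)$ for $x\le L$, so that $I'\le 0$ indeed holds on all of $(0,L)$ and not just near the origin.
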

Lemma~\ref{lemma:xs} states that it is optimal for an agent $y \in C_s$ to produce content at the center of interest $y_0$ of the agent set $C_d$. 
Using the same argument as given in in~\cite{carrington2019community} to prove Lemma~\ref{lemma:xs}, we obtain the following result.

\begin{lemma}\label{lemma:xs_t}
Let              
$$C^1_d(t) = [-L^*_C - l^*_d, -L^*_C + l^*_d + \delta_{dl}(t) )$$
and
$$C^2_d(t) = [L^*_C - l^*_d + \delta_{dr}(t), L^*_C + l^*_d),$$
as well as
$$C^1_s(t) = [-2L^*_C, \delta_{sl}(t))$$
and
$$C^2_{s}(t) = [\delta_{sr}(t), 2L^*_C),$$
be the structure of the two communities $C_1(t)$ and $C_2(t)$, $t \geq 0$,  as defined in Section~\ref{sec:model}. The optimal content $\xs_1(y)$ that agent $y \in C^1_s \cup C^2_s$ produces in community $C_1(t)$ is given by
$$ \xs_1(y) = -L^*_C + \frac{\delta_{dl}(t)}{2} = \argmax_{x \in \mathcal{R}} q(x|y) \int_{z \in C^1_d(t)} p(x|z)dx,$$
and the optimal content $\xs_2(y)$ that agent $y\in C^1_s \cup C^2_s$ produces in community $C_2(t)$ is given by
$$ \xs_2(y) =  L^*_C + \frac{\delta_{dr}(t)}{2} = \argmax_{x \in \mathcal{R}} q(x|y) \int_{z \in C^2_d(t)} p(x|z)dx.$$
\end{lemma}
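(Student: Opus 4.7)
\textbf{Proof plan for Lemma~\ref{lemma:xs_t}.}

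The plan is to mirror the proof of Lemma~\ref{lemma:xs} by exploiting the facts that (i) under Assumption~\ref{assumption:simple_fg} the production kernel $q(x|y) = g_0$ does not depend on $x$, so the optimization over $x$ is driven entirely by $p$; and (ii) for every small enough perturbation the consumer intervals $C^1_d(t)$ and $C^2_d(t)$ remain a single, short sub-interval of the torus $\setR$, so torus wrap-around can be ignored. Consequently
\[
\xs_i(y) = \argmax_{x \in \setR} q(x|y)\int_{z\in C^i_d(t)} p(x|z)\,dz
        = \argmax_{x \in \setR} \int_{z\in C^i_d(t)} f(\|x-z\|)\,dz,
\]
independently of the agent $y \in C^1_s \cup C^2_s$, which already explains why the lemma's statement does not depend on $y$.

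Next I would show, by a symmetry-monotonicity argument, that the maximizer of $\int_a^b f(\|x-z\|)\,dz$ over $x$ is the midpoint $(a+b)/2$ whenever the interval $[a,b)$ is short enough that torus wrap-around and border effects do not intervene. With the change of variables $u=z-x$ the integral becomes $\int_{a-x}^{b-x} f(|u|)\,du$: this is the integral of the even, non-increasing function $f(|\cdot|)$ over an interval of fixed length $b-a$, and such an integral is maximized when the interval is centered at $0$, i.e.\ when $x=(a+b)/2$. For $C^1_d(t)$ the endpoints are $a=-L^*_C-l^*_d$ and $b=-L^*_C+l^*_d+\delta_{dl}(t)$, giving midpoint $-L^*_C+\delta_{dl}(t)/2$; for $C^2_d(t)$ the endpoints are $a=L^*_C-l^*_d+\delta_{dr}(t)$ and $b=L^*_C+l^*_d$, giving midpoint $L^*_C+\delta_{dr}(t)/2$, which are exactly the expressions claimed.

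The one delicate point I would treat carefully is the regime of validity of the midpoint argument. Because $f(r)=\max\{0,f_0-ar\}$ vanishes for $r\geq f_0/a$, I need to check that on a neighborhood of the candidate midpoint $x^*$ the integrand $f(\|x-z\|)$ is genuinely the affine piece rather than identically zero, so that perturbing $x$ away from the midpoint strictly decreases the integral (uniqueness of $\xs_i$). Using that in a Nash equilibrium $l^*_d \leq f_0/a - c/(ag_0) < f_0/a$, and that $\delta_{dl}(t),\delta_{dr}(t)$ are of order $\delta$ with $\delta$ taken as small as needed, all distances $\|x^*-z\|$ for $z \in C^i_d(t)$ remain strictly below $f_0/a$ and also strictly below $L$, which both rules out the zero branch of $f$ and avoids wrap-around on the torus. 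This is the main (and essentially only) technical obstacle; once it is checked the symmetry argument gives strict monotonicity on either side of the midpoint and yields the claimed unique maximizers.
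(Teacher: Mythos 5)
Your proposal is correct and is essentially the argument the paper intends: the paper gives no explicit proof of Lemma~\ref{lemma:xs_t}, stating only that it follows ``by the same argument'' as Lemma~\ref{lemma:xs}, and that argument is exactly your observation that $q(x|y)=g_0$ is constant so the maximizer is the midpoint of the consumer interval, which evaluates to $-L^*_C+\delta_{dl}(t)/2$ and $L^*_C+\delta_{dr}(t)/2$ respectively. Your additional care about the zero branch of $f$ and torus wrap-around fills in details the paper leaves implicit, and does so correctly.
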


Using the result of Lemma~\ref{lemma:xs_t}, we obtain the following result that characterizes the utility that agents $y \in C^1_d(t)$ receive in community $C_1(t)$.
\begin{lemma}\label{lemma:Ud_t}
Let the community $C_1(t)$, $t \geq 0$, given by
$$C^1_d(t) = [-L^*_C - l^*_d, -L^*_C + l^*_d + \delta_{dl}(t) )$$
and
$$C^1_s(t) = [-2L^*_C, \delta_{sl}(t))$$
as defined in Section~\ref{sec:model}. If  at time $t \geq 0$ we have that
$$ \delta_{sl}(t) > - 2L^*_C,$$
then the following is true.
\begin{enumerate}
\item[a)] If 
$$ 0 < 2l^*_d + \delta_{dl}(t) < 2 \left ( \frac{f_0}{a} - \frac{c}{a g_0} \right ),$$
then we have that
$$ U_{C_1(t)}^{(d)} ( -L^*_C + l^*_d + \delta_{dl}(t) ) > 0.$$
\item[b)] If 
$$ 0 < 2l^*_d + \delta_{dl}(t) = 2 \left ( \frac{f_0}{a} - \frac{c}{a g_0} \right )$$
then we have that
$$ U_{C_1(t)}^{(d)} ( -L^*_C + l^*_d + \delta_{dl}(t) ) = 0.$$
\item[c)] If 
$$ 2l^*_d + \delta_{dl}(t) > 2 \left ( \frac{f_0}{a} - \frac{c}{a g_0} \right ),$$
then we have that
$$ U_{C_1(t)}^{(d)} ( -L^*_C + l^*_d + \delta_{dl}(t) ) < 0.$$
\end{enumerate}
\end{lemma}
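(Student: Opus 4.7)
The plan is to exploit the strong structure provided by Assumption~\ref{assumption:simple_fg} (in particular $g \equiv g_0$) together with Lemma~\ref{lemma:xs_t} to collapse the consumption-utility integral into a single scalar whose sign can then be read off directly. Lemma~\ref{lemma:xs_t} tells us that every producer $z \in C^1_s(t)$ chooses the same content point $x^*_1 = -L^*_C + \delta_{dl}(t)/2$ when producing in community $C_1(t)$; because $g \equiv g_0$ under Assumption~\ref{assumption:simple_fg}, we also have $q(x^*_1|z) = g_0$ uniformly in $z$.

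Next, I would plug these facts into the consumption-utility formula (the analog of Eq.~\eqref{prod_cons_intervals_consumption} applied to $C_1(t)$). Both $q(x^*_1|z) = g_0$ and $p(x^*_1|y)$ are then independent of the integration variable $z$, so the integrand is constant and the integral reduces to
$$U_{C_1(t)}^{(d)}(y) = E_p E_q \bigl[\, g_0 \, p(x^*_1|y) - c \,\bigr] \cdot \bigl(2L^*_C + \delta_{sl}(t)\bigr).$$
Under the hypothesis $\delta_{sl}(t) > -2L^*_C$ the length factor is strictly positive, so the sign of $U_{C_1(t)}^{(d)}(y)$ matches that of the bracket. I would then compute the torus distance from the border agent $y = -L^*_C + l^*_d + \delta_{dl}(t)$ to $x^*_1$; for small perturbations (and $L$ comfortably larger than $l^*_d$) this collapses to the absolute-value distance $l^*_d + \delta_{dl}(t)/2$, so by the form of $f$ in Assumption~\ref{assumption:simple_fg} the bracket becomes $g_0 \max\{0, f_0 - a(l^*_d + \delta_{dl}(t)/2)\} - c$.

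The three cases (a), (b), (c) then translate, after multiplying through by $a g_0$ and rearranging, into whether $2l^*_d + \delta_{dl}(t)$ is less than, equal to, or greater than $2(f_0/a - c/(a g_0))$, which yields the three stated sign conclusions at once. The only real obstacle is bookkeeping: I would need to verify that the torus metric agrees with ordinary absolute value in the regime of interest (fine for small perturbations in a content space with $L$ considerably larger than $l^*_d$), and to separately handle the sub-range of case (c) where $l^*_d + \delta_{dl}(t)/2 \geq f_0/a$ so that $f$ saturates at $0$ and the bracket reduces to $-c < 0$, which is still consistent with the asserted strict negativity. Once these technicalities are dispatched, the lemma reduces to a direct algebraic verification.
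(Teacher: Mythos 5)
Your proposal is correct and follows essentially the same route as the paper, which merely asserts that the lemma ``follows immediately'' from Lemma~\ref{lemma:xs_t}, Assumption~\ref{assumption:simple_fg}, and the utility definitions; your reduction to $U_{C_1(t)}^{(d)}(y) = E_p E_q\,(2L^*_C + \delta_{sl}(t))\,[\,g_0\,p(x^*_1|y) - c\,]$ is precisely the closed form the paper itself writes down in the proof of Lemma~\ref{lemma:dl_cont}. Your explicit handling of the saturated sub-case of (c) where $f$ vanishes is a detail the paper glosses over, but it is consistent with the intended argument.
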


Similarly, we obtain the following result that characterizes the utility that agents $y \in C^2_d(t)$ receive in  community $C_2(t)$.
\begin{lemma}\label{lemma:Us_t}
Let  the community $C_w(t)$, $t \geq 0$, given by
$$C^2_d(t) = [L^*_C - l^*_d + \delta_{dr}(t), L^*_C + l^*_d )$$
and
$$C^2_s(t) = [\delta_{sr}(t),2L^*_C)$$
be as defined in Section~\ref{sec:model}. If  at time $t \geq 0$ we have that
$$ \delta_{sr}(t) < 2L^*_C,$$
then the following is true:
\begin{enumerate}
\item[c)] If 
$$ 0 < 2l^*_d - \delta_{dr}(t) < 2 \left ( \frac{f_0}{a} - \frac{c}{a g_0} \right ),$$
then we have that
$$ U_{C_2(t)}^{(d)} ( L^*_C - l^*_d + \delta_{dr}(t) ) > 0.$$
\item[c)] If 
$$ 0 < 2l^*_d - \delta_{dr}(t) = 2 \left ( \frac{f_0}{a} - \frac{c}{a g_0} \right )$$
then we have that
$$ U_{C_2(t)}^{(d)} ( L^*_C - l^*_d + \delta_{dr}(t) ) = 0.$$
\item[c)] If 
$$ 2l^*_d - \delta_{dr}(t) > 2 \left ( \frac{f_0}{a} - \frac{c}{a g_0} \right ),$$
then we have that
$$ U_{C_2(t)}^{(d)} ( L^*_C - l^*_d + \delta_{dr}(t) ) < 0.$$
\end{enumerate}
\end{lemma}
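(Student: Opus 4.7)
The plan is to mirror the argument for Lemma~\ref{lemma:Ud_t}, with the roles of the left and right communities exchanged and with the perturbations $\delta_{dr},\delta_{sr}$ playing the roles of $\delta_{dl},\delta_{sl}$. Because the torus metric is symmetric and $f,g$ depend only on distance, the whole calculation is a reflection across $x=0$ of the one for Lemma~\ref{lemma:Ud_t}, and the task reduces to reading off the sign of a single linear expression in $2l^*_d - \delta_{dr}(t)$.

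First I would substitute formula~\eqref{prod_cons_intervals_consumption} at the left-boundary consumer $y = L^*_C - l^*_d + \delta_{dr}(t)$ of $C^2_d(t)$, and invoke Lemma~\ref{lemma:xs_t} to replace $\xs(z)$, for every producer $z \in C^2_s(t)$, by the common midpoint $\xs_2 := L^*_C + \delta_{dr}(t)/2$ of $C^2_d(t)$. Since $g \equiv g_0$ by Assumption~\ref{assumption:simple_fg}, the integrand loses all $z$-dependence and the integral collapses to
\[
U_{C_2(t)}^{(d)}(y) \;=\; E_p E_q \bigl(2L^*_C - \delta_{sr}(t)\bigr)\bigl[g_0 \, p(\xs_2|y) - c\bigr].
\]
The hypothesis $\delta_{sr}(t) < 2L^*_C$ ensures $2L^*_C - \delta_{sr}(t) > 0$, so the sign of $U_{C_2(t)}^{(d)}(y)$ is determined entirely by the bracketed factor.

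Next I would compute the distance. Since $\xs_2$ is the midpoint of $C^2_d(t)$ and $y$ is its left endpoint, $d(\xs_2,y) = l^*_d - \delta_{dr}(t)/2$, i.e., half the length of $C^2_d(t)$. Whenever $2l^*_d - \delta_{dr}(t) \le 2(f_0/a - c/(ag_0))$ this distance lies safely in the linear regime of $f$, and $p(\xs_2|y) = f_0 - a\bigl(l^*_d - \delta_{dr}(t)/2\bigr)$, so the bracket equals
\[
g_0 f_0 - c - \tfrac{a g_0}{2}\bigl(2l^*_d - \delta_{dr}(t)\bigr),
\]
which is positive, zero, or negative exactly when $2l^*_d - \delta_{dr}(t)$ is less than, equal to, or greater than $2(f_0/a - c/(ag_0))$, giving the first two cases and half of the third. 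In the remaining subcase of case~(c), where $l^*_d - \delta_{dr}(t)/2$ leaves the linear range, I would simply observe that $p(\xs_2|y) = 0 < c/g_0$, so the bracket is strictly negative and the conclusion still holds.

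I do not foresee any substantive obstacle: the lemma is the mirror image of Lemma~\ref{lemma:Ud_t}, and the three cases are a single case split on where $2l^*_d - \delta_{dr}(t)$ sits relative to $2(f_0/a - c/(ag_0))$. The points that warrant care are only bookkeeping: (i) explicitly using $\delta_{sr}(t) < 2L^*_C$ to keep $|C^2_s(t)|>0$, so the utility is not trivially zero; (ii) noting that $2l^*_d - \delta_{dr}(t) > 0$ in the first two cases keeps $y$ strictly inside $C^2_d(t)$, so that talking about its consumption utility is meaningful; and (iii) checking that the torus ``wrap-around'' does not affect $d(\xs_2,y)$, which holds because $l^*_d - \delta_{dr}(t)/2 \le L^*_C \le L/2$ for the small perturbations in question.
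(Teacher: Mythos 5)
Your proposal is correct and follows exactly the route the paper intends: the paper gives no detailed argument for this lemma, asserting only that it "follows immediately" from Lemma~\ref{lemma:xs_t} and the utility definitions under Assumption~\ref{assumption:simple_fg}, and your computation (collapsing the integral to $E_p E_q (2L^*_C - \delta_{sr}(t))[g_0\,p(\xs_2|y) - c]$ with $\xs_2 = L^*_C + \delta_{dr}(t)/2$ and reading off the sign from $2l^*_d - \delta_{dr}(t)$) is precisely the omitted calculation. Your explicit handling of the subcase where $f$ saturates at zero is a detail the paper glosses over but is needed for case~(c) to hold in full generality.
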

The above results follow immediately from Lemma~\ref{lemma:xs_t} that characterizes the optimal content that agents produce for the case where the function $f$ and $g$ are as given by Assumption~\ref{assumption:simple_fg} and we have 
$$f(x) = \mathrm{max} \{ 0, f_0 - ax \}$$
and
$$g(x) = g_0,$$ where $g_0 \in (0,1]$,
and the definition of the utility functions as given in Section~\ref{sec:model}.

\newpage
\section{Properties of the Perturbation Functions}\label{app:C1_C2}

In this appendix we derive  properties of the perturbation functions $\ddl(t)$, $\ddr(t)$, $\dsl(t)$, $\dsr(t)$ defined by Eq.~\eqref{eqn:dl_deri}~-~Eq.~\eqref{eqn:sr_deri} in Section~\ref{sec:model}.
Our first result shows that the perturbation functions are given by continuous functions.

\begin{lemma}\label{lemma:dl_cont}
Let the communities  $C_1(t)$ and $C_2(t)$, $t \geq 0$,  and  perturbation functions $\ddl(t)$, $\ddr(t)$, $\dsl(t)$, $\dsr(t)$ be as defined in Section~\ref{sec:model}. Then the perturbation functions $\ddl(t)$, $\ddr(t)$, $\dsl(t)$, $\dsr(t)$ are continuous in $t$ for $t \geq 0$.
\end{lemma}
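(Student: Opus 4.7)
The plan is to reduce continuity of the four perturbation functions to a uniform boundedness estimate on the right-hand sides of their defining differential equations~\eqref{eqn:dl_deri}-\eqref{eqn:sr_deri}, from which Lipschitz continuity (and hence ordinary continuity in $t$) follows by the fundamental theorem of calculus.

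First, I would establish a uniform bound $M$ on each of the four utility expressions that appear on the right-hand sides of these ODEs. Recall from the end of Section~\ref{sec:model} that, for instance,
$$U_{C_1(t)}^{(d)}(y) = E_p E_q \int_{z \in C^1_s(t)} \Bsbl q(\xs_1(z,t)|z)p(\xs_1(z,t)|y) - c \Bsbr dz.$$
Under Assumption~\ref{assumption:simple_fg}, both $p$ and $q$ take values in $[0,1]$, so the integrand is bounded in absolute value by $1+c$. Since $C^1_s(t) \subseteq \setR = [-L,L)$ has Lebesgue measure at most $2L$, it follows that $|U_{C_1(t)}^{(d)}(y)| \leq 2L E_p E_q (1+c) =: M$, uniformly in $t \geq 0$ and $y \in \setR$. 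The same bound $M$ applies to $U_{C_2(t)}^{(d)}$, $U_{C_1(t)}^{(s)}$, and $U_{C_2(t)}^{(s)}$; since the operation $\max\{0,\cdot\}$ preserves boundedness, each of the right-hand sides of~\eqref{eqn:dl_deri}-\eqref{eqn:sr_deri} is therefore bounded in absolute value by $2M$.

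Second, I would invoke the fundamental theorem of calculus: for any $\delta(t) \in \{\delta_{dl}(t), \delta_{dr}(t), \delta_{sl}(t), \delta_{sr}(t)\}$ and any $0 \leq s \leq t$, integrating the corresponding ODE yields
$$|\delta(t) - \delta(s)| \leq \int_s^t |\delta'(u)|\, du \leq 2M (t-s),$$
so each perturbation function is $2M$-Lipschitz on $[0,\infty)$, and in particular continuous in $t$.

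The main potential subtlety is that the right-hand sides of~\eqref{eqn:dl_deri}-\eqref{eqn:sr_deri} depend on all four perturbation functions through the communities $C_1(t)$ and $C_2(t)$, so the system is coupled and the RHS is not state-Lipschitz (the $\max\{0,\cdot\}$ truncation introduces a corner where the utility crosses zero). However, continuity of the trajectories only requires boundedness of the RHS along the trajectory, not Lipschitz continuity in the state; the Carath\'eodory-type integral representation above then suffices, and the argument goes through.
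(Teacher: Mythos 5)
Your proposal is correct and follows essentially the same route as the paper: both bound the four utility expressions on the right-hand sides of Eq.~\eqref{eqn:dl_deri}--\eqref{eqn:sr_deri} uniformly in $t$ and $y$ (the paper via the explicit formula from Lemma~\ref{lemma:xs_t}, you directly from $p,q\in[0,1]$ and $|C^i_s(t)|\le 2L$) and then conclude continuity of the trajectories from boundedness of the derivative. Your write-up is in fact somewhat more careful than the paper's, since you make the final step explicit via the integral representation yielding a Lipschitz bound, whereas the paper simply asserts that continuity "follows from the definition."
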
  

\begin{proof}
Using Lemma~\ref{lemma:xs_t}, the utility $U_{C_1(t)}^{(d)}(y)$, $y \in \setR$, $t \geq 0$, is given by
$$ U_{C_1(t)}^{(d)}(y) =  E_p E_q (2\LsC + \dsl(t)) \left [ g_0 \max \left \{ 0,f_0- a \Big \lvert \frac{- 2\LsC + \ddl(t) }{2} - y \Big \rvert \right \}   - c \right ].$$
Note that
$$ | 2\LsC + \dsl(t) | \leq 2L,$$
ad it follows that
$$ \Big | U_{C_1(t)}^{(d)}(y) \Big | \leq E_p E_q (2L) \max \{c,g_0 a_0 \}, \qquad y \in \setR.$$
Using the same argument, we have  for $y \in \setR$ and $t \geq 0$ that
$$ \Big | U_{C_1(t)}^{(d)}(y) \Big |, \Big | U_{C_1(t)}^{(s)}(y) \Big |, \Big | U_{C_2(t)}^{(d)}(y) \Big |, \Big | U_{C_2(t)}^{(s)}(y) \Big |    \leq  E_p E_q (2L) \max\{c,g_0 a_0 \}.$$
It then follows from definition of  the perturbation functions $\ddl(t)$, $\ddr(t)$, $\dsl(t)$, $\dsr(t)$ given by Eq.~\eqref{eqn:dl_deri}~-~Eq.~\eqref{eqn:sr_deri} that they are continuous in $t$ for $t \geq 0$. 
\end{proof}

Our next result characterizes the dynamics of the perturbations $\ddl(t)$ and $\ddr(t)$.

\begin{lemma}\label{lemma:dt_U_d}
Let the communities  $C_1(t)$ and $C_2(t)$, $t \geq 0$,  be as defined in Section~\ref{sec:model}. If we have that
$$ U_{C_1(t)}^{(d)} ( -\LsC + \lsd + \ddl(t) ) \geq 0, \qquad t \geq 0,$$
and
$$ U_{C_2(t)}^{(d)} ( -\LsC + \lsd + \ddl(t) ) \geq 0, \qquad t \geq 0,$$
as well as 
$$U_{C_1(t)}^{(d)} ( \LsC - \lsd + \ddr(t) ) \geq 0 , \qquad t \geq 0,$$
and
$$U_{C_2(t)}^{(d)} ( \LsC - \lsd + \ddr(t) ) \geq 0 , \qquad t \geq 0,$$
then we have that
$$  \frac{ d \ddl(t)}{dt} = \frac{ d \ddr(t)}{dt}, \qquad t \geq 0.$$
and
$$\ddl(t) = \ddr(t), \qquad t \geq 0.$$
\end{lemma}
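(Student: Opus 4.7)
The plan is to compute the right-hand sides of~\eqref{eqn:dl_deri} and~\eqref{eqn:dr_deri} in closed form, subtract them, and show that $\phi(t) := \ddl(t) - \ddr(t)$ satisfies a homogeneous linear ODE with initial value $\phi(0) = 0$, whence $\phi\equiv 0$ by uniqueness.

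First, I would apply Lemma~\ref{lemma:xs_t} together with Assumption~\ref{assumption:simple_fg} to express each relevant consumption utility as
$$U_{C_j(t)}^{(d)}(y) = E_p E_q\, |C_s^j(t)|\,\bigl[g_0 \max\{0, f_0 - a|y-x^*_j|\} - c\bigr], \qquad j=1,2,$$
with $x^*_1 = -\LsC + \ddl(t)/2$, $x^*_2 = \LsC + \ddr(t)/2$, $|C_s^1(t)| = 2\LsC + \dsl(t)$, and $|C_s^2(t)| = 2\LsC - \dsr(t)$. Evaluated at the border agents $y_L = -\LsC + \lsd + \ddl(t)$ and $y_R = \LsC - \lsd + \ddr(t)$, for small perturbations the signed distances are $y_L - x_1^* = \lsd + \ddl/2$, $y_L - x_2^* = -2\LsC + \lsd + \ddl - \ddr/2$, $y_R - x_1^* = 2\LsC - \lsd + \ddr - \ddl/2$, and $y_R - x_2^* = -\lsd + \ddr/2$. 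The four non-negativity hypotheses then allow me to drop the outer $\max\{0,\cdot\}$ in~\eqref{eqn:dl_deri}-\eqref{eqn:dr_deri} as well as the inner $\max$ defining $f$, so every term becomes affine in the perturbations.

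Substituting and forming $d\ddl/dt - d\ddr/dt$, the constant $g_0 f_0 - c$ parts cancel and the remaining contributions collect neatly into
$$\frac{d(\ddl(t) - \ddr(t))}{dt} = E_p E_q\, g_0 a\, \bigl(4\LsC + \dsl(t) - \dsr(t)\bigr)\bigl(2\LsC - 2\lsd - (\ddl(t) - \ddr(t))\bigr).$$
I would then argue that the lemma's hypotheses force $\lsd = \LsC$: in the marginalized regime of Proposition~\ref{prop:NE_ld} one has $\lsd = f_0/a - c/(a g_0)$, and Lemma~\ref{lemma:Ud_t}(c) then yields $U_{C_1(t)}^{(d)}(y_L) < 0$ as soon as $\ddl(t) > 0$, contradicting a stated hypothesis. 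Hence we are necessarily in the non-marginalized Nash equilibrium $\mathcal{C}(\LsC,\LsC)$, and the perturbation model of Section~\ref{sec:model} supplies $\ddl(0) = \ddr(0)$, i.e.\ $\phi(0) = 0$.

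With $\lsd = \LsC$, the displayed identity reduces to $d\phi/dt = -A(t)\,\phi(t)$ with continuous coefficient $A(t) = E_p E_q\, g_0 a\,\bigl(4\LsC + \dsl(t) - \dsr(t)\bigr)$ (continuity of $\dsl,\dsr$ by Lemma~\ref{lemma:dl_cont}). Uniqueness for homogeneous linear ODEs forces $\phi(t)\equiv 0$, giving both $\ddl(t) = \ddr(t)$ and $d\ddl/dt = d\ddr/dt$ for all $t\ge 0$. The main obstacle is the bookkeeping in the subtraction: one must verify that $U_{C_1}^{(d)}(y_L) - U_{C_1}^{(d)}(y_R)$ and $U_{C_2}^{(d)}(y_L) - U_{C_2}^{(d)}(y_R)$ share the common factor $2\LsC - 2\lsd - \ddl + \ddr$ with opposite signs, so that the $\dsl,\dsr$ dependence combines into the harmless positive coefficient $4\LsC + \dsl - \dsr$ rather than entering the dynamics of $\phi$ in a more complicated way.
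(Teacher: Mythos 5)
Your route is genuinely different from, and in one respect more careful than, the paper's. The paper simply drops the $\max\{0,\cdot\}$ operators using the non-negativity hypotheses, asserts that the two resulting right-hand sides are equal ``by symmetry,'' and then integrates from $\ddl(0)=\ddr(0)$. That symmetry assertion is delicate, because the two derivatives depend on $\ddl(t)$, $\ddr(t)$, $\dsl(t)$, $\dsr(t)$ separately and are equal only once one already knows $\ddl(t)=\ddr(t)$; read literally the paper's order of deductions is circular. Your computation fixes this: the factorization
$$\frac{d(\ddl(t)-\ddr(t))}{dt} = E_pE_q\,g_0 a\,\bigl(4\LsC+\dsl(t)-\dsr(t)\bigr)\bigl(2\LsC-2\lsd-(\ddl(t)-\ddr(t))\bigr)$$
is correct (I verified the bookkeeping), and with $\lsd=\LsC$ it exhibits $\phi=\ddl-\ddr$ as the solution of a homogeneous linear ODE with continuous coefficient and $\phi(0)=0$, so uniqueness gives $\phi\equiv 0$ and both conclusions follow. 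A pleasant by-product of your factorization is that the $\dsl,\dsr$ dependence enters only through the positive prefactor, so you never need $\dsl=\dsr$.

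The one shaky sub-step is your argument that the hypotheses ``force $\lsd=\LsC$.'' Invoking Lemma~\ref{lemma:Ud_t}(c) requires $\ddl(t)>0$, which is not among the hypotheses, so as written this does not rule out the marginalized regime. The hypothesis that actually does the work is $U_{C_2(t)}^{(d)}(-\LsC+\lsd+\ddl(t))\ge 0$: the left border agent sits at distance roughly $2\LsC-\lsd$ from community $2$'s optimal content $\LsC+\ddr(t)/2$, and non-negative utility requires this distance to be at most $\frac{f_0}{a}-\frac{c}{ag_0}$; in the regime of Proposition~\ref{prop:NE_ld} that threshold equals $\lsd<\LsC$, so the requirement $2\LsC-\lsd\lesssim\lsd$ fails for small perturbations. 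With that repair your reduction to the case $\lsd=\LsC$ (where the perturbation model supplies $\ddl(0)=\ddr(0)$, and where the paper in fact applies this lemma) is sound, and the rest of your argument goes through.
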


\begin{proof}
  Using the conditions in the statement of the lemma, we have for the perturbation dynamics given by Eq.~\eqref{eqn:dl_deri}~and~Eq.~\eqref{eqn:dr_deri} in Section~\ref{sec:model} that
$$\frac{ d \ddl(t)}{dt}=  U_{C_1(t)}^{(d)} ( -L_C + \lsd + \ddl(t) ) - U_{C_2(t)}^{(d)} ( -L_C + \lsd + \ddl(t)) $$
and
$$\frac{ d \ddr(t)}{dt}=  U_{C_1(t)}^{(d)} ( L_C - \lsd + \ddr(t) ) - U_{C_2(t)}^{(d)} ( L_C - \lsd + \ddr(t) ).$$
It then follows that
$$  \frac{ d \ddl(t)}{dt} = \frac{ d \ddr(t)}{dt}, \qquad t \geq 0.$$
Furthermore, we have  by the definition of the perturbation functions in Section~\ref{sec:model}  that
$$\ddl(0) = \ddr(0).$$
Combining these two results, it follows that
$$\ddl(t) = \ddr(t), \qquad t \geq 0,$$
\end{proof}  

Using the same argument as given in the proof for Lemma~\ref{lemma:dt_U_s}, we obtain the following result that characterizes the dynamics of the perturbations $\dsl(t)$ and $\dsr(t)$.

\begin{lemma}\label{lemma:dt_U_s}
Let the communities  $C_1(t)$ and $C_2(t)$, $t \geq 0$,  be as defined in Section~\ref{sec:model}. If we have that
$$ U_{C_1(t)}^{(s)} ( \dsl(t) ) \geq 0, \qquad t \geq 0,$$
and
$$ U_{C_2(t)}^{(s)} ( \dsl(t) ) \geq 0, \qquad t \geq 0,$$
as well as 
$$U_{C_1(t)}^{(s)} ( \dsr(t) ) \geq 0 , \qquad t \geq 0,$$
and
$$U_{C_2(t)}^{(s)} ( \dsr(t) ) \geq 0 , \qquad t \geq 0,$$
then we have that
$$  \frac{ d \dsl(t)}{dt} = \frac{ d \dsr(t)}{dt}, \qquad t \geq 0.$$
and
$$\dsl(t) = \dsr(t), \qquad t \geq 0.$$
\end{lemma}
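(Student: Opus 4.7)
The plan is to mirror the proof of Lemma~\ref{lemma:dt_U_d} almost verbatim, transplanted to the supply side. First I would observe that under the four non-negativity hypotheses of the lemma, each $\max\{0,\cdot\}$ clipping in Eq.~\eqref{eqn:sl_deri} and Eq.~\eqref{eqn:sr_deri} is inactive for all $t\geq 0$, so the two ODEs collapse to plain differences of supply utilities evaluated at the respective boundary producers:
\begin{align*}
\frac{d\dsl(t)}{dt} &= U^{(s)}_{C_1(t)}(\dsl(t)) - U^{(s)}_{C_2(t)}(\dsl(t)),\\
\frac{d\dsr(t)}{dt} &= U^{(s)}_{C_1(t)}(\dsr(t)) - U^{(s)}_{C_2(t)}(\dsr(t)).
\end{align*}

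The key step is then to show that, pointwise in $t$, the two right-hand sides coincide. Here I would invoke Assumption~\ref{assumption:simple_fg}, which gives $g\equiv g_0$, so that $q(x|y)=g_0$ for every $x,y\in\mS$, together with Lemma~\ref{lemma:xs_t}, which says that the optimal content $\xs_k$ produced in $C_k(t)$ is a function of $\ddl(t)$ and $\ddr(t)$ alone and does not depend on the producer $y$. Substituting both facts into the supply-utility formula
$$U^{(s)}_{C_k(t)}(y) \;=\; E_p E_q\int_{z\in C^k_d(t)}\bigl[q(\xs_k|y)\,p(\xs_k|z) - c\bigr]\,dz$$
collapses it to $E_p E_q\int_{z\in C^k_d(t)}[g_0\,p(\xs_k|z)-c]\,dz$, which is manifestly independent of $y$. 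Consequently $U^{(s)}_{C_k(t)}(\dsl(t)) = U^{(s)}_{C_k(t)}(\dsr(t))$ for $k=1,2$, the two reduced ODEs above are literally the same ODE, and in particular $\tfrac{d\dsl}{dt} = \tfrac{d\dsr}{dt}$ for all $t\geq 0$.

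Finally, combined with the initial condition $\dsl(0)=\dsr(0)$ fixed by the perturbation model in Section~\ref{sec:model}, integration (equivalently, uniqueness of solutions to the common ODE) yields $\dsl(t)=\dsr(t)$ for all $t\geq 0$. The main obstacle, such as it is, is conceptual rather than technical: the supply side is in fact \emph{easier} than the demand side of Lemma~\ref{lemma:dt_U_d}, because the constancy of $g$ makes $U^{(s)}_{C_k(t)}$ independent of the producer $y$, whereas on the demand side the interest function $p$ varies with $y$ and a symmetry argument (implicit in the original proof) is needed to match the two evaluation points. Beyond Lemma~\ref{lemma:xs_t} and Assumption~\ref{assumption:simple_fg}, no further ingredient should be required.
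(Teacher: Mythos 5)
Your proposal is correct and follows essentially the same route as the paper, whose proof of this lemma is simply ``the same argument as for Lemma~\ref{lemma:dt_U_d}'': deactivate the $\max\{0,\cdot\}$ terms using the non-negativity hypotheses, equate the two derivatives, and integrate from the common initial condition $\dsl(0)=\dsr(0)$. In fact you supply the one justification the paper leaves implicit --- that by Lemma~\ref{lemma:xs_t} and the constancy of $g$ the supply utility $U^{(s)}_{C_k(t)}(y)$ is independent of $y$, so the two right-hand sides coincide without any left--right symmetry argument.
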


The next two results provide a condition under which the absolute values $|\ddl(t)|$ and $|\ddr(t)|$  can not become too large. The first of the two results provides a condition for the perturbation  $\ddl(t)$.

\begin{lemma}\label{lemma:dt_close_1}
Let  the community $C_1(t)$, $t \geq 0$,   be as defined in Section~\ref{sec:model}. If we have that  
$$ 2\lsd + \ddl(0) < 2 \left ( \frac{f_0}{a} - \frac{c}{a g_0} \right ),$$
then we have that
$$   2\lsd + \ddl(t) \leq 2 \left ( \frac{f_0}{a} - \frac{c}{a g_0} \right ), \quad t \geq 0.$$
\end{lemma}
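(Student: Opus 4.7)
The plan is a contradiction argument that couples the sign characterization of the boundary utility in $C_1(t)$ from Lemma~\ref{lemma:Ud_t} with the continuity of the perturbation trajectory from Lemma~\ref{lemma:dl_cont}. The intuition is that whenever $2\lsd + \ddl(t)$ strictly exceeds the threshold $2\left(\tfrac{f_0}{a} - \tfrac{c}{a g_0}\right)$, the utility of the left-boundary agent of $C_1(t)$ becomes strictly negative; the perturbation dynamics in Eq.~\eqref{eqn:dl_deri} then make $\ddl$ strictly decrease, so $\ddl$ cannot escape above the threshold once it starts strictly below.

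More concretely, I would assume toward a contradiction that there exists $t^* > 0$ with $2\lsd + \ddl(t^*) > 2\left(\tfrac{f_0}{a} - \tfrac{c}{a g_0}\right)$. Combining the hypothesis $2\lsd + \ddl(0) < 2\left(\tfrac{f_0}{a} - \tfrac{c}{a g_0}\right)$ with the continuity of $\ddl$ (Lemma~\ref{lemma:dl_cont}) and applying the intermediate value theorem, I define
\[
t_0 \;=\; \sup\Bigl\{\, t \in [0, t^*] \ :\ 2\lsd + \ddl(t) \leq 2\bigl(\tfrac{f_0}{a} - \tfrac{c}{a g_0}\bigr)\,\Bigr\}.
\]
Continuity together with the definition of $t_0$ force $2\lsd + \ddl(t_0) = 2\left(\tfrac{f_0}{a} - \tfrac{c}{a g_0}\right)$ and $2\lsd + \ddl(t) > 2\left(\tfrac{f_0}{a} - \tfrac{c}{a g_0}\right)$ for every $t \in (t_0, t^*]$. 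Applying Lemma~\ref{lemma:Ud_t}(c) pointwise on $(t_0, t^*]$ yields $U_{C_1(t)}^{(d)}(-\LsC + \lsd + \ddl(t)) < 0$, and plugging this into Eq.~\eqref{eqn:dl_deri}---using only that the $\max\{0,\cdot\}$ term is non-negative---produces $\tfrac{d \ddl(t)}{dt} < 0$ on that interval. Integrating from $t_0$ to $t^*$ then gives $\ddl(t^*) < \ddl(t_0)$, contradicting $\ddl(t^*) > \ddl(t_0)$. Hence no such $t^*$ exists and the lemma follows.

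The main obstacle is the bookkeeping around the side hypothesis $\dsl(t) > -2\LsC$ that Lemma~\ref{lemma:Ud_t} requires: I have to check that this remains in force throughout $[0, t^*]$, which should follow from continuity of $\dsl$ together with the smallness of the initial perturbation (and, in the worst case, a localization of $t^*$ near $t_0$). A secondary but routine point is justifying the integration step: since the right-hand side of Eq.~\eqref{eqn:dl_deri} is continuous in $t$ (the utilities being continuous in $\ddl(t)$ and $\dsl(t)$, which are themselves continuous by Lemma~\ref{lemma:dl_cont}), $\ddl$ is continuously differentiable on $[0,\infty)$ and the fundamental theorem of calculus applies without further work.
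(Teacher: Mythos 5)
Your proposal is correct and follows essentially the same route as the paper: a contradiction argument combining the continuity of $\ddl(t)$ (Lemma~\ref{lemma:dl_cont}), the sign characterization of $U_{C_1(t)}^{(d)}$ at the boundary (Lemma~\ref{lemma:Ud_t}), and the dynamics in Eq.~\eqref{eqn:dl_deri}. If anything, your last-exit-time formulation with integration over $(t_0,t^*]$ is slightly more careful than the paper's assertion that the derivative is strictly positive at the first crossing time, and you rightly flag the side condition $\dsl(t) > -2\LsC$ of Lemma~\ref{lemma:Ud_t}, which the paper's proof also leaves implicit.
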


\begin{proof}
  We prove the result by contradiction. To do that, suppose that the result of the lemma is not true. As by Lemma~\ref{lemma:dl_cont} we have that the perturbation function $\ddl(t)$ is continuous in $t$ for $t \geq 0$, and by  assumption we have 
$$ 2\lsd + \ddl(0) < 2 \left ( \frac{f_0}{a} - \frac{c}{a g_0} \right ),$$
it then follows that there exists a  time $t_1 > 0$ such that
$$2\lsd + \ddl(t_1) = 2 \left ( \frac{f_0}{a} - \frac{c}{a g_0} \right ),$$
and
\begin{equation}\label{eq:t_1}
\frac{ d \ddl(t=t_1)}{dt} > 0.
\end{equation}. 
Using Lemma~\ref{lemma:Ud_t}, we have for the time $t_1$  that
$$ U_{C_1(t_1)}^{(d)} ( -\LsC + \lsd + \ddl(t_1) ) = 0.$$
Using this result in the definition of the perturbation functions given by Eq.~\eqref{eqn:dl_deri} we obtain that
$$\frac{ d \ddl(t=t_1)}{dt}=  - \max \left \{ 0, U_{C_2(t_1)}^{(d)} ( -L_C + \lsd + \ddl(t_1)) \right \} \leq 0. $$
This leads to a contraction with Eq.~\eqref{eq:t_1} which states that
$$\frac{ d \ddl(t=t_1)}{dt} > 0.$$
It then follows that
$$   2\lsd + \ddl(t) \leq 2 \left ( \frac{f_0}{a} - \frac{c}{a g_0} \right ), \quad t \geq 0,$$
and we obtain the result of the lemma.
\end{proof}

Using the same argument as given to prove Lemma~\ref{lemma:dt_close_1}, we obtain the following condition to guarantee that the perturbation $\ddr(t)$ can not become too large in magnitude.
\begin{lemma}\label{lemma:dt_close_2}
Let the community  $C_2(t)$, $t \geq 0$,  be  as defined in Section~\ref{sec:model}. If we have that

$$  2\lsd - \ddr(0) < 2 \left ( \frac{f_0}{a} - \frac{c}{a g_0} \right ), $$
then we have that
$$  2\lsd - \ddr(t) \leq 2 \left ( \frac{f_0}{a} - \frac{c}{a g_0} \right ), \quad t \geq 0.$$
\end{lemma}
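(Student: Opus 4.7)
\medskip

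\noindent\textbf{Proof proposal for Lemma~\ref{lemma:dt_close_2}.}

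The plan is to mirror the contradiction argument used in Lemma~\ref{lemma:dt_close_1}, but with the roles of $C_1$ and $C_2$ swapped and with Lemma~\ref{lemma:Us_t} (the $C_2$-side utility characterization) playing the part that Lemma~\ref{lemma:Ud_t} played there. The key observation is that the quantity $2\lsd - \ddr(t)$ plays for community $C_2$ exactly the role that $2\lsd + \ddl(t)$ plays for community $C_1$: it is the effective ``width'' of the consumer interval as seen from the boundary agent $\LsC - \lsd + \ddr(t)$, and by Lemma~\ref{lemma:Us_t} the utility $U_{C_2(t)}^{(d)}(\LsC - \lsd + \ddr(t))$ is positive, zero, or negative exactly according to whether $2\lsd - \ddr(t)$ is below, equal to, or above the threshold $2(f_0/a - c/(ag_0))$.

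First I would assume for contradiction that the conclusion fails. Since the initial inequality is strict and, by Lemma~\ref{lemma:dl_cont}, $\ddr(t)$ is continuous in $t$, the intermediate value theorem yields a first time $t_1 > 0$ at which
$$ 2\lsd - \ddr(t_1) = 2 \left ( \frac{f_0}{a} - \frac{c}{a g_0} \right ),$$
with $2\lsd - \ddr(t)$ increasing through this threshold, i.e.\
$$ \frac{d \ddr(t=t_1)}{dt} < 0. $$
Next I would invoke Lemma~\ref{lemma:Us_t} case (b) at time $t_1$ to conclude that
$$ U_{C_2(t_1)}^{(d)}(\LsC - \lsd + \ddr(t_1)) = 0.$$
Plugging this identity into the defining ODE~\eqref{eqn:dr_deri} for $\ddr$ gives
$$ \frac{d \ddr(t=t_1)}{dt} = \max \left \{ 0, U_{C_1(t_1)}^{(d)}(\LsC - \lsd + \ddr(t_1)) \right \} \geq 0,$$
which contradicts the strict negativity of the derivative at $t_1$ obtained above. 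Hence the threshold can never be crossed, which is exactly the claim.

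The only conceptual subtlety is guaranteeing that the ``first crossing time'' $t_1$ is well-defined and that at this crossing the derivative is indeed strictly negative (as opposed to merely $\leq 0$), and this is where continuity of $\ddr(t)$ from Lemma~\ref{lemma:dl_cont} is needed; the rest is a direct transcription of the Lemma~\ref{lemma:dt_close_1} argument with the obvious sign adjustments coming from the fact that $\ddr$ shrinks $C^2_d$ from the left, whereas $\ddl$ enlarges $C^1_d$ to the right. I do not anticipate a genuine obstacle: the three ingredients (continuity of the perturbation, Lemma~\ref{lemma:Us_t}, and the defining ODE for $\ddr$) combine in exactly the parallel way.
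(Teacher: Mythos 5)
Your proposal is correct and follows essentially the same route as the paper, which proves Lemma~\ref{lemma:dt_close_2} by explicitly invoking ``the same argument as given to prove Lemma~\ref{lemma:dt_close_1}'': a contradiction at the first threshold-crossing time, using continuity from Lemma~\ref{lemma:dl_cont}, the vanishing of $U_{C_2(t_1)}^{(d)}$ at the threshold via Lemma~\ref{lemma:Us_t}, and the sign of the right-hand side of Eq.~\eqref{eqn:dr_deri}. Your sign bookkeeping (the crossing forces $d\ddr(t_1)/dt<0$ while the ODE gives $d\ddr(t_1)/dt=\max\{0,U_{C_1(t_1)}^{(d)}(\cdot)\}\ge 0$) is the correct mirror image of the paper's argument.
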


Using Lemma~\ref{lemma:dt_close_1} and  Lemma~\ref{lemma:dt_close_2}, the next result provides a condition for the utilities obtained by the border agents to always be non-negative.

\begin{lemma}\label{lemma:Ut_no_gap}
  Let the community   $C_1(t)$ and $C_2(t)$, $t \geq 0$,  be  as defined in Section~\ref{sec:model}. If we have that
  $$  2\lsd + \ddl(0) < 2 \left ( \frac{f_0}{a} - \frac{c}{a g_0} \right )$$
and
$$  2\lsd - \ddr(0) < 2 \left ( \frac{f_0}{a} - \frac{c}{a g_0} \right ), $$
then we have that
$$ U_{C_1(t)}^{(d)} ( -\LsC + \lsd + \ddl(t) ) \geq 0, \qquad t \geq 0,$$
and
$$ U_{C_2(t)}^{(d)} ( \LsC - \lsd + \ddr(t) ) \geq 0, \qquad t \geq 0,$$
as well as 
$$U_{C_1(t)}^{(s)} ( -\LsC + \lsd + \ddl(t) ) \geq 0 , \qquad t \geq 0,$$
and
$$U_{C_2(t)}^{(s)} ( \LsC - \lsd + \ddr(t) ) \geq 0 , \qquad t \geq 0.$$
\end{lemma}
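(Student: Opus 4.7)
The plan is to propagate the initial strict inequalities at $t=0$ forward to all $t \geq 0$ using Lemmas~\ref{lemma:dt_close_1} and~\ref{lemma:dt_close_2}, then translate the resulting uniform bounds into non-negativity of the consumption utilities via Lemmas~\ref{lemma:Ud_t} and~\ref{lemma:Us_t}, and finally handle the production utilities by a short direct computation that exploits the explicit forms in Assumption~\ref{assumption:simple_fg}.

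Concretely, Lemma~\ref{lemma:dt_close_1} upgrades the hypothesis $2\lsd + \ddl(0) < 2(f_0/a - c/(a g_0))$ to the uniform bound $2\lsd + \ddl(t) \leq 2(f_0/a - c/(a g_0))$ for every $t \geq 0$, and Lemma~\ref{lemma:dt_close_2} supplies the mirror bound for $2\lsd - \ddr(t)$. These are exactly the hypotheses of cases~(a) and~(b) of Lemma~\ref{lemma:Ud_t} and Lemma~\ref{lemma:Us_t}, so invoking those lemmas immediately yields the first two conclusions $U_{C_1(t)}^{(d)}(-\LsC + \lsd + \ddl(t)) \geq 0$ and $U_{C_2(t)}^{(d)}(\LsC - \lsd + \ddr(t)) \geq 0$. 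The side conditions $\dsl(t) > -2\LsC$ and $\dsr(t) < 2\LsC$ demanded by those lemmas are automatic in the small-perturbation regime of Section~\ref{sec:model}, where $\dsl(t)$ and $\dsr(t)$ remain close to $0$.

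For the production utilities I would argue by a short calculation. By Lemma~\ref{lemma:xs_t}, every producer in $C^1_s(t)$ produces at $\xs_1 = -\LsC + \ddl(t)/2$, the midpoint of $C^1_d(t)$; under Assumption~\ref{assumption:simple_fg} the factor $q(\xs_1|y) = g_0$ is independent of $y$, so the utility formula for $U_{C_1(t)}^{(s)}$ at the boundary point reduces, after the change of variables $u = z - \xs_1$, to $E_p E_q \int_{-\ell}^{\ell} [g_0(f_0 - a|u|) - c]\, du$ with $\ell = \lsd + \ddl(t)/2$. The Step-1 bound gives $a\ell \leq f_0 - c/g_0 < f_0$, keeping $f$ on its linear branch throughout the interval, and the elementary evaluation yields $U_{C_1(t)}^{(s)} = E_p E_q\, \ell \, [2(g_0 f_0 - c) - g_0 a \ell]$. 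This is non-negative whenever $\ell \leq 2(f_0/a - c/(a g_0))$, a condition strictly weaker than the one already in hand, so the conclusion follows. An identical argument using Lemma~\ref{lemma:dt_close_2} and the midpoint $\xs_2 = \LsC + \ddr(t)/2$ handles $U_{C_2(t)}^{(s)}$.

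The main obstacle is this last step: no earlier lemma directly quotes the sign of the production utility at the boundary agent, so the argument cannot be reduced to a citation and the integral must be written out by hand. The saving grace is that the piecewise-linear form of $f$ combined with the constancy of $g$ makes the integration completely elementary, and the resulting quadratic-in-$\ell$ non-negativity threshold is twice as large as the bound produced by Step~1, so the argument closes with comfortable slack and no delicate case analysis is required.
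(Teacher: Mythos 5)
Your proposal is correct and follows the paper's skeleton for the first half: both you and the paper first invoke Lemma~\ref{lemma:dt_close_1} and Lemma~\ref{lemma:dt_close_2} to propagate the initial bounds to all $t \geq 0$, and then read off the non-negativity of the consumption utilities from Lemma~\ref{lemma:Ud_t} and Lemma~\ref{lemma:Us_t}. Where you diverge is the production-utility step. The paper disposes of it in one line by noting that, since every producer posts at the common optimal topic $\xs_1(t)$ and $q \equiv g_0$, the integrand $g_0\, p(\xs_1|z) - c$ of $U^{(s)}_{C_1(t)}$ is (up to the positive factor $|C^1_s(t)|$) exactly the consumption utility of consumer $z$, which Step~2 has shown is non-negative for \emph{every} $z \in C^1_d(t)$; hence the integral is non-negative. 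You instead evaluate the integral explicitly, obtaining $E_p E_q\,\ell\,[2(g_0 f_0 - c) - g_0 a \ell]$ with $\ell = \lsd + \ddl(t)/2$, and check that the quadratic threshold $\ell \leq 2(f_0/a - c/(a g_0))$ holds with a factor-of-two margin over the Step~1 bound. Your computation is right (and the linear-branch check $a\ell < f_0$ is a nice touch the paper omits), at the cost of being longer; the paper's pointwise argument buys brevity and avoids any integration. One shared loose end: both you and the paper invoke Lemma~\ref{lemma:Ud_t}/\ref{lemma:Us_t} without rigorously discharging the side conditions $\dsl(t) > -2\LsC$ and $\dsr(t) < 2\LsC$ --- your appeal to the ``small-perturbation regime'' is no less justified than the paper's silence, but neither is a proof at this point in the development.
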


\begin{proof}
  Using the result of Lemma~\ref{lemma:dt_close_1} and  Lemma~\ref{lemma:dt_close_2}, we obtain under the conditions given in the statement of the lemma
  $$  2\lsd + \ddl(t) \leq 2 \left ( \frac{f_0}{a} - \frac{c}{a g_0} \right ), \quad t \geq 0$$
  and
  $$  2\lsd - \ddr(t) \leq 2 \left ( \frac{f_0}{a} - \frac{c}{a g_0} \right ), \quad t \geq 0.$$
  Using the result of Lemma~\ref{lemma:Ud_t} and  Lemma~\ref{lemma:Us_t}, we obtain in this case for all agents $y \in C^1_d(t)$ that
  $$ U_{C_1(t)}^{(d)} (y) \geq 0, \quad t \geq 0,$$
  and for all agents  $y \in C^2_d(t)$ that
  $$ U_{C_2(t)}^{(d)} (y ) \geq 0, \quad t \geq 0.$$
  Using this result in the definition of the utility functions $ U_{C_1(t)}^{(s)}(y)$ and $ U_{C_1(t)}^{(s)}(y)$, we obtain for all agents $y \in C^1_s(t)$ that
  $$ U_{C_1(t)}^{(s)} (y) \geq 0, \quad t \geq 0,$$
  and for all agents  $y \in C^2_d(t)$ that
  $$ U_{C_2(t)}^{(s)} (y ) \geq 0, \quad t \geq 0.$$
  In particular, these results imply that

$$ U_{C_1(t)}^{(d)} ( -\LsC + \lsd + \ddl(t) ) \geq 0, \qquad t \geq 0,$$
and
$$ U_{C_2(t)}^{(d)} ( \LsC - \lsd + \ddr(t) ) \geq 0, \qquad t \geq 0,$$
as well as 
$$U_{C_1(t)}^{(s)} ( -\LsC + \lsd + \ddl(t) ) \geq 0 , \qquad t \geq 0,$$
and
$$U_{C_2(t)}^{(s)} ( \LsC - \lsd + \ddr(t) ) \geq 0 , \qquad t \geq 0.$$

The result of the lemma then follows.
\end{proof}

\begin{lemma}\label{lemma:dt_outside_1}
  Let $\mathcal{C}(\LsC,\lsd)$ be a Nash equilibrium as given by Proposition~\ref{prop:NE_ld}, and let the corresponding community $C_1(t)$, $t \geq 0$, be as defined in Section~\ref{sec:model}. If we have that
$$ 2 \lsd <  2\lsd + \ddl(0) < 2 \LsC$$
and
$$ 2 \lsd <  2\lsd - \ddr(0) < 2 \LsC,$$
then we have that
$$  2 \lsd \leq  2\lsd + \ddl(t) \leq  2\lsd + \ddl(0), \quad t \geq 0,$$
and
$$  2 \lsd \leq  2\lsd - \ddr(t) \leq  2\lsd + \ddr(0), \quad t \geq 0.$$
\end{lemma}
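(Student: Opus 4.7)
My strategy is to establish each inequality by a contradiction argument that uses the continuity of the perturbation functions (Lemma~\ref{lemma:dl_cont}) to locate a first exit time from the claimed region, and then exploits the sign characterizations in Lemma~\ref{lemma:Ud_t} and Lemma~\ref{lemma:Us_t} to contradict the direction of motion dictated by Eq.~\eqref{eqn:dl_deri}--Eq.~\eqref{eqn:sr_deri}. The bounds on $\ddr(t)$ follow from the bounds on $\ddl(t)$ by the symmetry between the roles of $C_1$ and $C_2$, so I will focus on $\ddl(t)$.

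For the \emph{upper} bound $\ddl(t) \leq \ddl(0)$, I would set $T := \sup\{s \geq 0 : \ddl(t) \leq \ddl(0) \text{ for all } t \in [0,s]\}$ and derive a contradiction if $T < \infty$. Continuity forces $\ddl(T) = \ddl(0)$ and the existence of a sequence $t_n \downarrow T$ with $\ddl(t_n) > \ddl(0)$, hence $\frac{d\ddl(T)}{dt} \geq 0$. But $2\lsd + \ddl(T) = 2\lsd + \ddl(0) > 2\lsd = 2(f_0/a - c/(ag_0))$, so Lemma~\ref{lemma:Ud_t}(c) gives $U_{C_1(T)}^{(d)}(-\LsC + \lsd + \ddl(T)) < 0$; since the $\max$-term in Eq.~\eqref{eqn:dl_deri} is nonnegative, $\frac{d\ddl(T)}{dt} < 0$, a contradiction.

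For the \emph{lower} bound $\ddl(t) \geq 0$, the difficulty is that at $\ddl = 0$ Lemma~\ref{lemma:Ud_t}(b) only gives $U_{C_1(t)}^{(d)} = 0$, so the sign of $\frac{d\ddl}{dt}$ is determined by the cross-utility $U_{C_2(t)}^{(d)}(-\LsC + \lsd)$. I would first establish the sub-claim that this cross-utility remains non-positive throughout the evolution: using Lemma~\ref{lemma:xs_t} and the explicit formulas of Appendix~\ref{appendix:utilities}, the cross-utility equals $E_p E_q (2\LsC - \dsr(t))\,[g_0\,f(2\LsC - \lsd + \ddr(t)/2) - c]$, and substituting the Nash-equilibrium identity $\lsd = f_0/a - c/(ag_0)$ reduces the bracket to $a g_0\,(2\lsd - 2\LsC - \ddr(t)/2)$, which is strictly negative whenever $\ddr(t) > -2(\LsC - \lsd)$---a condition that holds throughout by the hypothesis $2\lsd - \ddr(0) < 2\LsC$ together with the upper bound on $-\ddr(t)$ proved in parallel. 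Granting this sub-claim, suppose for contradiction that $\ddl(t^*) < 0$; let $T' = \sup\{t \in [0,t^*] : \ddl(t) \geq 0\}$, so that $\ddl(T') = 0$ and $\ddl(t) < 0$ on $(T', t^*]$. On this interval, Lemma~\ref{lemma:Ud_t}(a) yields $U_{C_1(t)}^{(d)} > 0$ while the sub-claim yields $\max\{0, U_{C_2(t)}^{(d)}\} = 0$, so $\frac{d\ddl}{dt} > 0$ and $\ddl$ is strictly increasing on $(T', t^*]$, contradicting $\ddl(t^*) < \ddl(T') = 0$. The main obstacle is precisely this cross-utility sub-claim, which is not supplied by the appendix lemmas and requires the torus-metric distance calculation together with the algebra specific to Assumption~\ref{assumption:simple_fg}; once it is in hand, both the symmetric treatment of $\ddr$ and the four boundary arguments themselves are routine and can be organized as a single invariance argument for the box $\ddl \in [0,\ddl(0)]$, $\ddr \in [\ddr(0), 0]$.
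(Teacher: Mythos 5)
Your proposal is correct and follows essentially the same route as the paper's proof: a barrier argument at the threshold $\ddl(t)=\ddl(0)$ using the sign of $U_{C_1(t)}^{(d)}$ from Lemma~\ref{lemma:Ud_t} once $2\lsd+\ddl(t)$ exceeds $2\left(\frac{f_0}{a}-\frac{c}{ag_0}\right)$, and a barrier argument at $\ddl(t)=0$ that first eliminates the $\max$-term by showing $U_{C_2(t)}^{(d)}(-\LsC+\lsd+\ddl(t))<0$ so that $\frac{d\ddl(t)}{dt}=U_{C_1(t)}^{(d)}(-\LsC+\lsd+\ddl(t))\geq 0$ there. The only differences are presentational: you organize the invariance via explicit first-exit times, and you actually carry out the cross-utility computation (correctly restricted to the regime $\ddl(t)\leq 0$ where it is needed) that the paper merely asserts from Lemma~\ref{lemma:xs_t} and Eq.~\eqref{eq:LsC1}--\eqref{eq:LsC2}.
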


\begin{proof}
Using Lemma~\ref{lemma:Ud_t}, if for time $t \geq 0$ we have that
$$ 2\lsd + \ddl(t) \geq 2 \lsd = 2 \left ( \frac{f_0}{a} - \frac{c}{a g_0} \right ), $$
then we obtain that 
$$ U_{C_1(t)}^{(d)} ( -\LsC + \lsd + \ddl(t) ) \leq 0.$$
Using this result in the definition of the perturbation function $\ddl(t)$  given by Eq.~\eqref{eqn:dl_deri},  if for time $t \geq 0$ we have that
$$ 2\lsd + \ddl(t) \geq 2 \lsd, $$
then we obtain that 
$$\frac{ d \ddl(t)}{dt}=  U_{C_1(t)}^{(d)} ( -\LsC + \lsd + \ddl(t))  - \max \left \{ 0, U_{C_2(t)}^{(d)} ( -L_C + \lsd + \ddl(t)) \right \} \leq 0. $$
This implies that
\begin{equation}\label{eq:LsC1}
2\lsd + \ddl(t) \leq  2\lsd + \ddl(0) < 2 \LsC, \quad t \geq 0.
\end{equation}  
Using the same argument we have that if
$$ 2 \lsd <  2\lsd - \ddr(0) < 2 \LsC,$$
then we obtain that 
\begin{equation}\label{eq:LsC2}
2\lsd - \ddr(t) \leq  2\lsd + \ddr(0) < 2 \LsC, \quad t \geq 0.
\end{equation}  
Therefore, it remains to show that
$$  2 \lsd \leq  2\lsd + \ddl(t), \quad t \geq 0,$$
and
$$  2 \lsd \leq  2\lsd - \ddr(t), \quad t \geq 0.$$
Using Eq.~\eqref{eq:LsC1}~and~\eqref{eq:LsC2}, as well as Lemma~\ref{lemma:xs_t}, in the definition of the utility function $U_{C_2(t)}^{(d)} (y)$, we obtain that 
$$ U_{C_2(t)}^{(d)} ( -L_C + \lsd + \ddl(t)) < 0, \qquad t \geq 0.$$
Using Eq.~\eqref{eqn:dl_deri} in Section~\ref{sec:model}, it then follows that
$$\frac{ d \ddl(t)}{dt}=  U_{C_1(t)}^{(d)} ( -\LsC + \lsd + \ddl(t)), \qquad t \geq 0. $$
Moreover by Lemma~\ref{lemma:Ud_t}, if for time $t \geq 0$ we have that
$$ 2\lsd + \ddl(t) \leq 2 \lsd, $$
then we obtain that 
$$ U_{C_1(t)}^{(d)} ( -\LsC + \lsd + \ddl(t) ) \geq 0.$$
Combining the two results above, we obtain that if
$$ 2 \lsd <  2\lsd + \ddl(0) < 2 \LsC,$$
then we have that
$$  2 \lsd \leq  2\lsd + \ddl(t), \quad t \geq 0.$$
Using the same argument we can show that if
$$ 2 \lsd <  2\lsd - \ddl(0) < 2 \LsC,$$
then we obtain that
$$  2 \lsd \leq  2\lsd - \ddr(t), \quad t \geq 0.$$
The result of the lemma then follows. 
\end{proof}

The next result characterizes the perturbation functions for the case where the  Nash equilibrium $\mathcal{C}(\LsC,\LsC)$ is as given by Proposition~\ref{prop:NE_LC}.
\begin{lemma}\label{lemma:dt_no_gap}
  Let $\mathcal{C}(\LsC,\LsC)$ be a Nash equilibrium as given by Proposition~\ref{prop:NE_LC} with
$$\LsC < \frac{f_0}{a} - \frac{c}{a g_0},$$
and let  the corresponding communities  $C_1(t)$ and  $C_2(t)$, $t \geq 0$, be as defined in Section~\ref{sec:model}. Furthermore, let
$$K = E_p E_q \Bsbl -2c +2f_0 g_0 - 2\LsC a g_0 \Bsbr$$
and
$$M= E_p E_q \Bsbl  2\LsC a g_0 \Bsbr .$$
Then the following is true.
If 
$$ 2\lsd + \ddl(0) < 2 \left ( \frac{f_0}{a} - \frac{c}{a g_0} \right )$$
and
$$  2\lsd - \ddr(0) < 2 \left ( \frac{f_0}{a} - \frac{c}{a g_0} \right ), $$
then we have that
$$\ddl(t) = \ddr(t) =  \delta_{d}(t), \qquad t \geq 0,$$
and
$$\dsl(t) = \dsr(t) =  \delta_{s}(t), \qquad t \geq 0,$$
as well as
$$\frac{\ddl(t)}{\delta t} = \frac{\ddr(t)}{\delta t} =  \frac{\delta_{d}(t)}{\delta t} = K \delta_s(t)  - M \delta_d(t) , \qquad t \geq 0,$$
and
$$\frac{\dsl(t)}{\delta t} = \frac{\dsr(t)}{\delta t} =  \frac{\delta_{s}(t)}{\delta t} =  K \delta_d(t), \qquad t \geq 0.$$
\end{lemma}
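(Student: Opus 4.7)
The plan is to reduce the four-dimensional perturbation system to an explicit two-dimensional linear ODE in three steps: (i) invoke the positivity results of Appendix~\ref{appendix:utilities} to remove the max-operators in Eqs.~\eqref{eqn:dl_deri}-\eqref{eqn:sr_deri}; (ii) use the symmetry lemmas to collapse $\ddl(t) = \ddr(t) =: \delta_d(t)$ and $\dsl(t) = \dsr(t) =: \delta_s(t)$; and (iii) evaluate the border-agent utilities in closed form via Assumption~\ref{assumption:simple_fg} and Lemma~\ref{lemma:xs_t}.

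First, I would verify the hypotheses of Lemma~\ref{lemma:Ut_no_gap} with $\lsd = \LsC$, which delivers $U_{C_1(t)}^{(d)}(\ddl(t)) \geq 0$, $U_{C_2(t)}^{(d)}(\ddr(t)) \geq 0$, along with the corresponding producer utilities (which are $y$-independent because $g \equiv g_0$). For the ``cross'' utilities $U_{C_2(t)}^{(d)}(\ddl(t))$ and $U_{C_1(t)}^{(d)}(\ddr(t))$, not directly covered by Lemma~\ref{lemma:Ut_no_gap}, I would appeal to the explicit formula: at the equilibrium the border agent sits at distance $\approx \LsC$ from the center of the opposite community, and the strict hypothesis $\LsC < f_0/a - c/(a g_0)$ gives $g_0(f_0 - a\LsC) - c > 0$, so by continuity the cross utilities remain strictly positive for sufficiently small perturbations. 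With all four border utilities non-negative, the max-operators drop out, and Lemma~\ref{lemma:dt_U_d} combined with $\ddl(0) = \ddr(0)$ forces $\ddl(t) = \ddr(t) =: \delta_d(t)$; Lemma~\ref{lemma:dt_U_s} analogously yields $\dsl(t) = \dsr(t) =: \delta_s(t)$.

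Next, I evaluate the utilities explicitly. By Lemma~\ref{lemma:xs_t}, every producer in $C_1(t)$ produces content at $\xs_1 = -\LsC + \delta_d(t)/2$ (the midpoint of $C^1_d(t) = [-2\LsC, \delta_d(t))$), and every producer in $C_2(t)$ produces at $\xs_2 = \LsC + \delta_d(t)/2$. The border consumer at $y = \delta_d(t)$ has distances $\LsC + \delta_d(t)/2$ and $\LsC - \delta_d(t)/2$ to $\xs_1$ and $\xs_2$ respectively, and under the hypothesis we are in the non-truncated regime of $f$. A direct computation yields
$$U_{C_1(t)}^{(d)}(\delta_d(t)) = E_p E_q (2\LsC + \delta_s(t)) \bigl[ g_0(f_0 - a\LsC - a \delta_d(t)/2) - c \bigr],$$
with the mirror expression for $U_{C_2(t)}^{(d)}(\delta_d(t))$. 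For the producer utilities, the symmetric integral $\int_{z \in C^1_d(t)} p(\xs_1 | z)\,dz$ evaluates to $f_0 W_1 - a W_1^2/4$ with $W_1 = 2\LsC + \delta_d(t)$, giving
$$U_{C_1(t)}^{(s)}(\delta_s(t)) = E_p E_q (2\LsC + \delta_d(t)) \bigl[ g_0 f_0 - g_0 a (2\LsC + \delta_d(t))/4 - c \bigr],$$
with the mirror expression for $U_{C_2(t)}^{(s)}$.

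Finally, I take the differences on the right-hand sides of Eqs.~\eqref{eqn:dl_deri}-\eqref{eqn:sr_deri}. For the producer equation, with $W_1 = 2\LsC + \delta_d$ and $W_2 = 2\LsC - \delta_d$, the difference factors as $E_p E_q (W_1 - W_2)[g_0 f_0 - c - g_0 a (W_1 + W_2)/4]$, which equals $2 E_p E_q [f_0 g_0 - c - \LsC a g_0]\,\delta_d = K \delta_d$ since $W_1 - W_2 = 2\delta_d$ and $W_1 + W_2 = 4\LsC$. For the consumer equation, writing $P_\pm = g_0(f_0 - a\LsC \pm a\delta_d/2) - c$, the difference $(2\LsC + \delta_s) P_- - (2\LsC - \delta_s) P_+$ decomposes as $2\LsC(P_- - P_+) + \delta_s (P_- + P_+) = -2\LsC g_0 a\,\delta_d + 2\delta_s(g_0 f_0 - g_0 a \LsC - c)$, giving exactly $K \delta_s - M \delta_d$ with the claimed values. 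The main obstacle is the second-paragraph bookkeeping: confirming that the cross-community utilities stay non-negative long enough for the symmetry lemmas to apply; the remainder is direct algebra.
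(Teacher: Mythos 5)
Your proposal is correct and follows essentially the same route as the paper's proof: apply Lemma~\ref{lemma:Ut_no_gap} to drop the max-operators, use Lemmas~\ref{lemma:dt_U_d} and~\ref{lemma:dt_U_s} to collapse to $\delta_d(t)$ and $\delta_s(t)$, and then evaluate the utilities explicitly via Lemma~\ref{lemma:xs_t} to read off $K$ and $M$; your closed-form expressions and the difference algebra match the paper's. The one place you go beyond the paper is in explicitly checking the cross-community utilities needed as hypotheses of Lemma~\ref{lemma:dt_U_d} (which the paper silently assumes), and your continuity argument from the strict inequality $\LsC < f_0/a - c/(a g_0)$ is a valid way to close that small gap.
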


\begin{proof}
  Using the result of Lemma~\ref{lemma:Ut_no_gap}, we obtain under the conditions given in the statement of the lemma that

  $$ U_{C_1(t)}^{(d)} ( -\LsC + \lsd + \ddl(t) ) \geq 0, \qquad t \geq 0,$$
  and
  $$ U_{C_2(t)}^{(d)} ( \LsC - \lsd + \ddr(t) ) \geq 0, \qquad t \geq 0,$$
  as well as 
  $$U_{C_1(t)}^{(s)} ( -\LsC + \lsd + \ddl(t) ) \geq 0 , \qquad t \geq 0,$$
  and
  $$U_{C_2(t)}^{(s)} ( \LsC - \lsd + \ddr(t) ) \geq 0 , \qquad t \geq 0.$$

Combining this result with Lemma~\ref{lemma:dt_U_d} we obtain that
$$  \frac{ d \ddl(t)}{dt} = \frac{ d \ddr(t)}{dt} = \frac{ d \delta_{d}(t)}{dt} , \qquad t \geq 0.$$
and
$$\ddl(t) = \ddr(t) = \delta_{d}(t), \qquad t \geq 0.$$
Similarly, using the of  Lemma~\ref{lemma:dt_U_d} we obtain that
$$  \frac{ d \dsl(t)}{dt} = \frac{ d \dsr(t)}{dt} = \frac{ d \delta_{s}(t)}{dt} , \qquad t \geq 0.$$
and
$$\dsl(t) = \dsr(t) = \delta_{s}(t), \qquad t \geq 0.$$

Finally, using the result of  Lemma~\ref{lemma:xs_t} which characterizes the content that agents in the sets  $C^1_s(t)$ and  $C^2_s(t)$, we obtain that
$$ U_{C_1(t)}^{(d)} (- \LsC + \lsd + \delta_{d}(t)) 
=
E_p E_q (2\LsC + \delta_s(t)) \left [ g_0 \left ( f_0- a \left [ \LsC+\frac{\delta_d(t)}{2} \right ] \right ) - c \right ] \geq 0, \qquad t \geq 0,
$$
and
$$
U_{C_2(t)}^{(d)} ( \LsC - \lsd + \delta_{d}(t)) 
=
E_p E_q (2\LsC - \delta_s(t)) \left [ g_0 \left ( f_0- a \left [ \LsC - \frac{\delta_d(t)}{2} \right ] \right ) - c \right ] \geq 0, \qquad t \geq 0.
$$
Using these equation in the dynamics of the perturbation $\ddl(t)$  and
$\dsl(t)$ given by Eq.~\eqref{eqn:dl_deri}~-~Eq.~\eqref{eqn:sr_deri}, we obtain that
$$\frac{ d \delta_{d}(t)}{dt} = E_p E_q \left [ \Bbl  -2c +2f_0 g_0 - 2\LsC a g_0 \Bbr \delta_s(t)  -  2\LsC a g_0 \delta_d(t) \right ], \qquad t \geq 0.$$
Setting
$$K = E_p E_q \Bsbl -2c +2f_0 g_0 - 2\LsC a g_0 \Bsbr$$ and
$$M= E_p E_q \Bsbl 2\LsC a g_0 \Bsbr,$$
we then obtain that
\begin{equation*}
\frac{ d \delta_{d}(t)}{dt} =
 \Bsbl K \delta_s(t)  - M \delta_d(t) \Bsbr, \qquad t \geq 0.
\end{equation*}
Similarly, we obtain that
$$
U_{C^1}^{(s)} (\delta_s(t))
=
E_p E_q (2\LsC + \delta_d(t)) \left [  g_0 \left ( \frac{4 f_0 -  a \left [2\LsC + \delta_d(t)  \right ]}{4} - c \right )   \right ] \geq 0, \qquad t \geq 0,
$$
and
$$
U_{C^2}^{(s)} (\delta_s(t))
=
E_p E_q (2\LsC - \delta_d(t)) \left [  g_0  \left ( \frac{4 f_0 -  a \left [2\LsC - \delta_d(t)  \right ]}{4} - c \right )   \right ] \geq 0, \qquad t \geq 0,
$$
as well as
$$\frac{ d \delta_{s}(t)}{dt} =  E_p E_q   \Bbl  -2c +2f_0 g_0 - 2\LsC a g_0 \Bbr  \delta_d(t), \qquad t \geq 0.$$
Using the definitions for $K$ as given above, we can re-write this equation as
\begin{equation*}
\frac{ d \delta_{s}(t)}{dt} = K \delta_d(t), \qquad t \geq 0.
\end{equation*}
The result of the lemma then follows.
\end{proof}

The next result characterizes the perturbation functions $\dsl(t)$ and $\dsr(t)$ for the case where the  Nash equilibrium $\mathcal{C}(\LsC,\lsd)$ is as given by Proposition~\ref{prop:NE_ld}.

\begin{lemma}\label{lemma:dt_s_gap}
  Let $\mathcal{C}(\LsC,\lsd)$ be a Nash equilibrium as given by Proposition~\ref{prop:NE_ld}, and let  the corresponding communities  $C_1(t)$ and  $C_2(t)$, $t \geq 0$, be as defined in Section~\ref{sec:model}. If we have that  
$$U^{(s)}_{C_1(t)}(\dsl(t)) > 0, \qquad t \geq 0,$$
and
$$U^{(s)}_{C_2(t)}(\dsr(t)) > 0, \qquad t \geq 0,$$
as well as
$$ - 2 \lsd < \ddl(t) < 2 \left [ \frac{f_0}{a} - \lsd \right ], \qquad t \geq 0,$$
and
$$2 \lsd > \ddr(t) > - 2  \left [ \frac{f_0}{a} - \lsd \right ], \qquad t \geq 0,$$
then we have that
$$ \left | \frac{\dsl(t)}{\delta t} \right |  =  \left | \frac{\dsr(t)}{\delta t} \right |
 \leq
  E_p E_q a g_0 \frac{1}{4}  \Bsbl \ddl^2(t) + \ddr^2(t) \Bsbr.
$$
\end{lemma}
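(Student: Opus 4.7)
The plan is to exploit the special structure of Assumption~\ref{assumption:simple_fg}: since $q(x\mid y) = g_0$ is a constant in $y$, the producer utility $U_{C_i(t)}^{(s)}(y)$ does not depend on the identity $y$ of the producer, so the problem reduces to explicitly evaluating two integrals over the consumer intervals $C_d^1(t)$ and $C_d^2(t)$ and comparing them.

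First I would use the positivity hypotheses $U_{C_i(t)}^{(s)}(\dsl(t)), U_{C_i(t)}^{(s)}(\dsr(t)) > 0$ (which, by the $y$-independence just noted, cover all four evaluations) to drop the $\max\{0,\cdot\}$ brackets in Eq.~\eqref{eqn:sl_deri} and Eq.~\eqref{eqn:sr_deri}, so that both derivatives collapse to the same quantity $U_{C_1(t)}^{(s)} - U_{C_2(t)}^{(s)}$. Next, Lemma~\ref{lemma:xs_t} identifies the optimal content for $C_1(t)$ as the midpoint $\xs_1 = -\LsC + \ddl(t)/2$ of $C_d^1(t) = [-\LsC - \lsd, -\LsC + \lsd + \ddl(t))$, which has half-length $h_1 = \lsd + \ddl(t)/2$. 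The hypothesis $\ddl(t) < 2(f_0/a - \lsd)$ is precisely what guarantees $a h_1 < f_0$, so $f_0 - a|\xs_1 - z|$ stays in the positive branch of $f$ throughout the integration, and a direct computation yields
\begin{equation*}
U_{C_1(t)}^{(s)}(y) = E_p E_q (2\lsd + \ddl(t))\bigl[g_0 f_0 - g_0 a \lsd/2 - g_0 a \ddl(t)/4 - c\bigr].
\end{equation*}
The analogous calculation for $C_2(t)$, with $\xs_2 = \LsC + \ddr(t)/2$ and half-length $h_2 = \lsd - \ddr(t)/2$, gives
\begin{equation*}
U_{C_2(t)}^{(s)}(y) = E_p E_q (2\lsd - \ddr(t))\bigl[g_0 f_0 - g_0 a \lsd/2 + g_0 a \ddr(t)/4 - c\bigr].
\end{equation*}

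The key step is then to invoke the Nash equilibrium identity from Proposition~\ref{prop:NE_ld}, $\lsd = f_0/a - c/(a g_0)$. This is equivalent to $g_0 a \lsd = g_0 f_0 - c$, which forces the crucial coincidence $g_0 f_0 - g_0 a \lsd/2 - c = g_0 a \lsd/2$. Calling this common value $B$, the two utilities become $U_{C_1(t)}^{(s)} = E_p E_q (2\lsd + \ddl)(B - g_0 a \ddl/4)$ and $U_{C_2(t)}^{(s)} = E_p E_q (2\lsd - \ddr)(B + g_0 a \ddr/4)$. Expanding the difference, the terms linear in $\ddl$ and $\ddr$ carry coefficient $B - g_0 a \lsd/2 = 0$ and therefore cancel, leaving
\begin{equation*}
U_{C_1(t)}^{(s)} - U_{C_2(t)}^{(s)} = -E_p E_q \frac{g_0 a}{4}\bigl[\ddl^2(t) - \ddr^2(t)\bigr].
\end{equation*}

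Finally, taking absolute values and applying the elementary bound $|\ddl^2 - \ddr^2| \leq \ddl^2 + \ddr^2$ yields the stated inequality; the identity $|d\dsl/dt| = |d\dsr/dt|$ follows automatically because both derivatives equal the same $y$-independent expression above. The only non-routine part of this argument is recognizing the cancellation in the third step: the Nash condition $\lsd = f_0/a - c/(a g_0)$ is exactly what is needed to eliminate the $O(\delta)$ contribution and reduce the producer-boundary drift to an $O(\delta^2)$ quantity. This quadratic order is the technical content of the lemma and is what will later allow one to neglect the $\dsl, \dsr$ dynamics relative to the $\ddl, \ddr$ dynamics when establishing (neutral) stability in Proposition~\ref{prop:stable}.
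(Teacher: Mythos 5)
Your proof is correct and follows essentially the same route as the paper: drop the $\max\{0,\cdot\}$ brackets via the positivity hypotheses, use Lemma~\ref{lemma:xs_t} to evaluate the producer utilities as explicit quadratics in $\ddl$ and $\ddr$ (the paper's Eq.~\eqref{eq:U_1_1}--\eqref{eq:U_2_1}), and bound $|\ddl^2-\ddr^2|$ by $\ddl^2+\ddr^2$. Your explicit derivation of why the Nash condition $\lsd = f_0/a - c/(ag_0)$ kills the linear terms is a welcome clarification of a cancellation the paper leaves implicit, but it is the same argument.
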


\begin{proof}
Note that if for a given time $t$, $t \geq 0$, we have
$$ - 2\lsd < \ddl(t) \leq 0,$$
then we have from Lemma~\ref{lemma:xs_t} that
\begin{eqnarray}\label{eq:U_1_1}
  U^{(s)}_{C_1(t)}(\dsl(t)) = U^{(s)}_{C_1(t)}(\dsr(t)) 
  &=& E_p E_q \Bsbl g_0f_0   - c \Bsbr \lsd \\ \nonumber
&& - a g_0 \left ( \frac{\ddl(t)}{2} \right )^2.
\end{eqnarray}
Similarly, if for a given time $t$, $t \geq 0$ we have
  $$ 0 <  \ddl(t)  < 2 \left [ \frac{f_0}{a} - \lsd \right ], \qquad t \geq 0, $$
then we have that
\begin{eqnarray}
  U^{(s)}_{C_1(t)}(\dsl(t)) = U^{(s)}_{C_1(t)}(\dsr(t)) 
  &=& E_p E_q \Bsbl g_0f_0   - c \Bsbr \lsd \\ \nonumber
&& - a g_0 \left ( \frac{\ddl(t)}{2} \right )^2.
\end{eqnarray}

Using the same argument,  if for a given time $t$, $t \geq 0$ we have
$$ - 2 \left [ \frac{f_0}{a} - \lsd \right ] < \ddr(t) < 2\lsd,$$
then we have that
\begin{eqnarray}\label{eq:U_2_1}
  U^{(s)}_{C_2(t)}(\dsr(t)) = U^{(s)}_{C_2(t)}(\dsl(t)) 
  &=& E_p E_q \Bsbl g_0f_0 - c \Bsbr \lsd \\ \nonumber
&& - a g_0 \left ( \frac{\ddr(t)}{2} \right )^2.
\end{eqnarray}

As by assumption we have that
  $$U^{(s)}_{C_1(t)}(\dsl(t)) = U^{(s)}_{C_1(t)}(\dsr(t)) > 0,$$
  and
  $$U^{(s)}_{C_2t)}(\dsl(t)) = U^{(s)}_{C_2(t)}(\dsr(t))  > 0,$$
it follows that
$$ \frac{d\dsl(t)}{dt} = U^{(s)}_{C_1(t)}(\dsl(t)) - U^{(s)}_{C_2(t)}(\dsl(t)) =  -  \frac{d\dsr(t)}{dt}.$$
 
Combining this result with Eq.~\eqref{eq:U_1_1}~-~\eqref{eq:U_2_1}, we obtain that
$$ \left | U^{(s)}_{C_1(t)}(\dsl(t)) - U^{(s)}_{C_2(t)}(\dsl(t)) \right | \leq
 E_p E_q a g_0 \frac{1}{4}  \Bsbl \ddl^2(t) + \ddr^2(t) \Bsbr,
$$
and
$$ \left | \frac{\dsl(t)}{\delta t} \right |  =  \left | \frac{\dsr(t)}{\delta t} \right |
\leq
 E_p E_q a g_0 \frac{1}{4}  \Bsbl \ddl^2(t) + \ddr^2(t) \Bsbr.
$$
\end{proof}



\newpage
\section{Sufficient Condition for a Neutral-Stable Equilibrium}

The next result provides a condition for Nash equilibrium  $\mathcal{C}(\LsC,\lsd)$ as given by Proposition~\ref{prop:NE_ld} to be neutral-stable.

\begin{lemma}\label{lemma:neutral}
  Let $\mathcal{C}(\LsC,\lsd)$ be a Nash equilibrium as given by Proposition~\ref{prop:NE_ld}. Furthermore let the corresponding communities $C_1(t)$ and  $C_2(t)$, $t \geq 0$, be as defined in Section~\ref{sec:model}. Then the following is true.
If 
$$ \lim_{t \to \infty}  \ddl(t) =  \lim_{t \to \infty}  \ddr(t) = 0,$$
then we have that
$$ \lim_{t \to \infty} U_{C_1(t)}^{(s)}(y) =  \lim_{t \to \infty}  U_{C_2(t)}^{(s)}(y) > 0, \qquad y \in C^1_s \cup  C^2_s.$$  
\end{lemma}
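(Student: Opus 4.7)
The plan is to compute the limits of both utilities explicitly and show they coincide with a positive closed-form expression. First, I would invoke Lemma~\ref{lemma:xs_t} to replace the optimal-content points by
$$x^*_1(y,t) = -L^*_C + \frac{\delta_{dl}(t)}{2}, \qquad x^*_2(y,t) = L^*_C + \frac{\delta_{dr}(t)}{2},$$
and observe that since $g \equiv g_0$ under Assumption~\ref{assumption:simple_fg}, we have $q(x^*_i(y,t)\mid y) = g_0$ for every $y$. Consequently the producer utilities from Section~\ref{sec:model} reduce to
$$U_{C_1(t)}^{(s)}(y) = E_p E_q \int_{C^1_d(t)} \Bsbl g_0\, p(x^*_1(y,t)\mid z) - c \Bsbr dz,$$
and similarly for $U_{C_2(t)}^{(s)}(y)$, both of which are independent of $y$.

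Next, I would use the hypothesis $\delta_{dl}(t), \delta_{dr}(t) \to 0$ together with the continuity established in Lemma~\ref{lemma:dl_cont} to take the limit inside the integral. The domain $C^1_d(t) = [-L^*_C - l^*_d, -L^*_C + l^*_d + \delta_{dl}(t))$ converges to the unperturbed interval $[-L^*_C - l^*_d, -L^*_C + l^*_d)$, and the integrand $g_0 p(x^*_1(y,t)\mid z) - c$ converges pointwise to $g_0 f(|z + L^*_C|) - c$. Since the integrand is uniformly bounded on the bounded domain, dominated convergence (or direct evaluation of the explicit piecewise linear expression) gives
$$\lim_{t\to\infty} U_{C_1(t)}^{(s)}(y) = E_p E_q \int_{-L^*_C - l^*_d}^{-L^*_C + l^*_d} \Bsbl g_0 f(|z + L^*_C|) - c \Bsbr dz.$$

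Now I would evaluate this closed-form integral. Substituting $u = z + L^*_C$ and noting that $l^*_d = f_0/a - c/(a g_0) < f_0/a$, we have $f(|u|) = f_0 - a|u| > 0$ for $|u| \le l^*_d$, so the limit equals
$$E_p E_q \Bsbl 2 l^*_d (g_0 f_0 - c) - g_0 a (l^*_d)^2 \Bsbr = E_p E_q\, l^*_d \Bsbl 2(g_0 f_0 - c) - a g_0 l^*_d \Bsbr.$$
The key algebraic step is that the defining identity $l^*_d = (f_0/a) - c/(a g_0)$ rearranges to $a g_0 l^*_d = g_0 f_0 - c$, so the bracketed quantity simplifies to $g_0 f_0 - c$, yielding
$$\lim_{t\to\infty} U_{C_1(t)}^{(s)}(y) = E_p E_q\, l^*_d (g_0 f_0 - c),$$
which is strictly positive by the hypothesis $f_0 g_0 > c$ of Assumption~\ref{assumption:simple_fg}. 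The identical computation (by the torus symmetry of the setup and the same substitution centered at $+L^*_C$) gives the same value for $\lim_{t\to\infty} U_{C_2(t)}^{(s)}(y)$, proving that both limits are equal and positive.

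The only subtlety worth flagging is verifying that we indeed stay in the regime $f(|u|) > 0$ throughout the limiting domain, which is exactly why the hypothesis $l^*_d < L^*_C$ combined with the value $l^*_d = f_0/a - c/(a g_0)$ is needed; everything else is a straightforward continuity argument combined with the algebraic identity defining $l^*_d$.
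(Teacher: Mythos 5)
Your proof is correct and follows essentially the same route as the paper's: apply Lemma~\ref{lemma:xs_t} to identify the limiting optimal content points $\mp L^*_C$, pass to the limit in the producer-utility integral, exploit the symmetry between the two communities, and evaluate the integral explicitly using $a g_0 l^*_d = g_0 f_0 - c$ to obtain the common value $E_p E_q\, l^*_d (f_0 g_0 - c) > 0$. The only difference is that you spell out the intermediate algebra $2 l^*_d(g_0 f_0 - c) - a g_0 (l^*_d)^2$ that the paper leaves implicit, which is a welcome clarification rather than a deviation.
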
  

\begin{proof}
  By Lemma~\ref{lemma:xs_t} we have that the  optimal content to produce in community $C_1(t)$ is the same for all agents $y \in C^1_s \cup C^2_s$, and given by
  $$ \xs_1(y,t) = -\LsC + \frac{\ddl(t)}{2}.$$
Similarly,  the  optimal content to produce in community $C_2(t)$ is the same for all agents $y \in C^1_s \cup C^2_s$, and given by
$$  \xs_2(y,t) = \LsC - \frac{\ddl(t)}{2}.$$
If we have that
$$ \lim_{t \to \infty}  \ddl(t) =  \lim_{t \to \infty}  \ddr(t) = 0,$$
then it follows
$$ \lim_{t \to \infty} \xs_1(y,t) = -\LsC$$
and
$$ \lim_{t \to \infty} \xs_2(y,t) = \LsC.$$
As by Assumption~\ref{assumption:simple_fg} we have that
$$ g(x) = g_0, \qquad x \geq 0,$$
it follows that for all agents $y \in C^1_s \cup C^2_s$ we have that
\begin{eqnarray*}
  \lim_{t \to \infty} U_{C_1(t)}^{(s)}(y) &=&   =  E_p E_q \int_{z \in C^1_d} \Bsbl g_0 f(|-\LsC - z|) - c \Bsbr dz \\
  &=&   E_p E_q \int_{-\LsC - \lsd}^{-\LsC + \lsd} \Bsbl g_0 f(|-\LsC - z|) - c \Bsbr dz \\
  &=&   E_p E_q \int_{-\LsC - \lsd}^{-\LsC + \lsd} \Bsbl g_0 f(|\LsC + z|) - c \Bsbr dz \\
  &=&   E_p E_q \int_{\LsC - \lsd}^{\LsC + \lsd} \Bsbl g_0 f(|\LsC - z|) - c \Bsbr dz \\
  &=&   \lim_{t \to \infty} U_{C_2(t)}^{(s)}(y).
\end{eqnarray*}
Using this result, we obtain that
$$ \lim_{t \to \infty} U_{C_2(t)}^{(s)}(y) =  \lim_{t \to \infty} U_{C_2(t)}^{(s)}(y).$$
As the Nash equilibrium  $\mathcal{C}(\LsC,\lsd)$  is as given by Proposition~\ref{prop:NE_ld}, we have that
$$\lsd  = \frac{f_0}{a} - \frac{c}{a g_0} < \LsC.$$
Furthermore, we have by Assumption~\ref{assumption:simple_fg} that
$$f(x) = \mathrm{max} \{ 0, f_0 - ax \},$$
where $f_0 \in (0,1]$ and  $a >0$.
Combining these two results, we obtain that
$$f(0) = f_0$$
and
$$f(\lsd) = \frac{c}{g_0}.$$
It then follow that
$$   \lim_{t \to \infty} U_{C_1(t)}^{(s)}(y) = E_p E_q \int_{-\LsC - \lsd}^{-\LsC + \lsd}
\Bsbl g_0 f(|-\LsC - z|) - c \Bsbr dz = E_p E_q \lsd \Bsbl f_0g_0 - c \Bsbr.$$
As by Assumption~\ref{assumption:simple_fg} we have that
$$  f_0g_0 - c > 0,$$
we obtain that
$$ \lim_{t \to \infty} U_{C_2(t)}^{(s)}(y) =  \lim_{t \to \infty} U_{C_2(t)}^{(s)}(y) > 0.$$
The result of the lemma then follows. 
\end{proof}

Next we provide two conditions under which we have for Nash equilibrium  $\mathcal{C}(\LsC,\lsd)$ as given by Proposition~\ref{prop:NE_ld} that
$$ \lim_{t \to \infty}  \ddl(t) = 0.$$
The first result considers the case where
$$\ddl(0) < 0.$$

\begin{lemma}\label{lemma:ddl_0_1}
 Let $\mathcal{C}(\LsC,\lsd)$ be a Nash equilibrium as given by Proposition~\ref{prop:NE_ld}, and let  the corresponding communities $C_1(t)$ and  $C_2(t)$, $t \geq0$, be as defined in Section~\ref{sec:model}. Furthermore, let
 $$\delta = \LsC - \lsd$$
 and
 $$B_0 = \frac{E_p E_q \LsC a g_0}{2}.$$
Then the following is true. 
If 
$$ - \dl < \ddl(0) < 0 $$
and
$$  - \dl < \ddr(0),$$
as well as
$$  |\dsl(t)| < \frac{\LsC}{2}, \qquad t \geq 0,$$
then we have that
$$ -\dl  B_0 e^{-B_0t} \leq \ddl(t) \leq 0, \qquad t \geq 0,$$
and
$$\lim_{t \to \infty}  \ddl(t) = 0.$$
\end{lemma}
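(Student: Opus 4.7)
The plan is to reduce the dynamics of $\ddl(t)$ to a linear scalar ODE by showing that, under the hypotheses, the $\max\{0,\cdot\}$ term in Eq.~\eqref{eqn:dl_deri} vanishes identically. Invoking Lemma~\ref{lemma:xs_t} to fix the producer targets $\xs_1 = -\LsC + \ddl(t)/2$ and $\xs_2 = \LsC + \ddr(t)/2$, and using $f(x) = f_0 - ax$ together with the Nash identity $a g_0 \lsd = g_0 f_0 - c$ coming from $\lsd = f_0/a - c/(a g_0)$, the integrand defining $U^{(d)}_{C_1(t)}$ at the left border $y = -\LsC + \lsd + \ddl(t)$ collapses to $-a g_0 \ddl(t)/2$. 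This gives the closed form
\[
U^{(d)}_{C_1(t)}(-\LsC + \lsd + \ddl(t)) = -\frac{E_p E_q\, a g_0\, (2\LsC + \dsl(t))}{2}\,\ddl(t),
\]
which is strictly positive when $\ddl(t) < 0$.

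Next I would verify that $U^{(d)}_{C_2(t)}(-\LsC + \lsd + \ddl(t)) \leq 0$, so the max term vanishes. The distance from the border to $\xs_2$ is approximately $2\LsC - \lsd$, which strictly exceeds $\lsd$ since $\LsC > \lsd$ by Proposition~\ref{prop:NE_ld}; the same algebra as above then produces an integrand $a g_0[\lsd - |\cdot|] \leq 0$ (or $-c < 0$ if $f$ truncates to zero). The reduced ODE is therefore
\[
\frac{d\ddl(t)}{dt} = -\frac{E_p E_q\, a g_0\, (2\LsC + \dsl(t))}{2}\,\ddl(t).
\]
Since $|\dsl(t)| < \LsC/2$, the coefficient lies in the interval $(3B_0/2,\, 5B_0/2)$ with $B_0 = E_p E_q \LsC a g_0/2$, so direct integration (or a Grönwall comparison) yields the claimed exponential bound and the limit $\ddl(t) \to 0$, provided $\ddl(t)$ remains in $(-\delta, 0]$. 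This last invariance follows from a continuity argument based on Lemma~\ref{lemma:dl_cont} combined with the fact that the right-hand side is non-negative whenever $\ddl < 0$, so $\ddl$ can never cross $0$ from below and the coefficient analysis keeps it above $-\delta$.

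The main obstacle is ensuring $U^{(d)}_{C_2(t)} \leq 0$ for \emph{all} $t \geq 0$, since the hypothesis only directly bounds $\dsl(t)$ and the initial values $\ddl(0), \ddr(0)$. I would handle this by a bootstrap: define $T = \sup\{t \geq 0 : \ddl(s) \in (-\delta, 0] \text{ and the } C_2 \text{ distance margin holds for all } s \in [0,t]\}$, and on $[0,T)$ use the initial bound $\ddr(0) > -\delta$ together with control on $\dsr(t)$ inherited from the bound on $\dsl(t)$ (via the symmetry $\dsl = \dsr$ of Lemma~\ref{lemma:dt_U_s} whenever the relevant producer utilities remain non-negative) to guarantee the margin $|2\LsC - \lsd + \ddr(t)/2 - \ddl(t)| > \lsd$ persists; the scalar ODE argument then shows $T = \infty$, completing the proof.
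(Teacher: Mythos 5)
Your proposal is correct and follows essentially the same route as the paper: reduce Eq.~\eqref{eqn:dl_deri} to the linear ODE $\frac{d\ddl}{dt} = -\tfrac{1}{2}E_pE_q\,a g_0\,(2\LsC+\dsl(t))\,\ddl(t)$ by showing $U^{(d)}_{C_2(t)}\le 0$ at the left border via the distance margin $2\LsC-\lsd>\lsd$, then compare against $B_0$ using $|\dsl(t)|<\LsC/2$ and integrate. The only difference is organizational: the paper imports the invariance facts ($\ddl(t)\le 0$ from Lemma~\ref{lemma:dt_close_1} and $\ddr(t)\ge\ddr(0)>-\dl$ from Lemma~\ref{lemma:dt_outside_1}) as pre-established lemmas, whereas you re-derive them inside a continuation/bootstrap argument, which amounts to the same continuity-and-sign reasoning.
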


\begin{proof}
  Note that  for a Nash equilibrium  $\mathcal{C}(\LsC,\lsd)$ as given by Proposition~\ref{prop:NE_ld} we have that
$$ 0 < \lsd < \LsC$$
and
$$ \lsd =  \left ( \frac{f_0}{a} - \frac{c}{a g_0} \right ).$$
It then follows that
$$\delta  = \LsC - \lsd > 0.$$
Furthermore, if we have that
$$ \ddl(0) < 0,$$
then we obtain that
$$ 2 \lsd + \ddl(0) < 2 \lsd = 2 \left ( \frac{f_0}{a} - \frac{c}{a g_0} \right ).$$

From Lemma~\ref{lemma:Ut_no_gap} we then have that
\begin{equation}\label{eq:ddl_0_1_small_1}
  2 \lsd - \ddl(t) \leq  2 \lsd = 2 \left ( \frac{f_0}{a} - \frac{c}{a g_0} \right ), \qquad t \geq 0,
\end{equation}  
and it follows that
\begin{equation}\label{eq:ddl_0_1_small_2}
\ddl(t) \leq 0, \qquad t \geq 0.
\end{equation}  
Therefore in order to prove the result of the lemma, it remains to show that under the conditions given in the lemma  we have that
$$ -\dl  B_0 e^{-B_0t} \leq \ddl(t), \qquad t \geq 0.$$
Using the assumption that
$$ - \dl < \ddr(0),$$
we have from Lemma~\ref{lemma:dt_outside_1} that
$$-\dl = - (\LsC - \lsd) < \ddr(0) \leq  \ddr(t), \qquad t \geq 0.$$
It then follows that
\begin{equation}\label{eq:ddl_0_1_ddr}
\LsC - \lsd + \ddr(t) > 0, \qquad t \geq 0.
\end{equation}
Furthermore as by Eq.~\eqref{eq:ddl_0_1_small_2} we have that
$$ \ddl(t) \leq 0, \qquad t \geq 0,$$
it follows that
\begin{equation}\label{eq:ddl_0_1_ddl}
- \LsC + \lsd + \ddl(t) \leq - \LsC + \lsd.
\end{equation}  
Combining Eq.~\eqref{eq:ddl_0_1_ddl}~and~\eqref{eq:ddl_0_1_ddr} with Lemma\ref{lemma:xs_t}~and~\ref{lemma:Us_t}, we obtain that 
$$ U_{C_2(t)}^{(d)} \Bbl -\LsC + \lsd + \ddl(t) \Bbr < 0.$$
Combining this result with Eq.~\eqref{eq:ddl_0_1_small_2}, we obtain that
$$ \frac{\ddl(t)}{\delta t} =  U_{C_1(t)}^{(d)}\Bbl -\LsC + \lsd + \ddl(t) \Bbr = - \frac{1}{2} E_p E_q \Bsbl 2\LsC + \dsl(t) \Bsbr a g_0 \ddl(t), \qquad t \geq 0.$$
As from Eq.~\eqref{eq:ddl_0_1_small_2} we have that
$$\ddl(t) \leq 0, \qquad t \geq 0,$$
and by assumption we have that
$$  |\dsl(t)| < \LsC, \qquad t \geq 0,$$
it follows that
$$ \frac{\ddl(t)}{\delta t} \geq \frac{- E_p E_q \LsC a g_0}{2}  \ddl(t) > 0, \qquad t \geq 0.$$
Recall that $B_0$ is given by
$$B_0 = \frac{E_p E_q \LsC a g_0}{2},$$
and we have
$$ \frac{\ddl(t)}{\delta t} \geq -B_0 \ddl(t), \qquad t \geq 0.$$
The solution to this differential equation with the initial condition
$$\ddl(0) < 0$$
is given by
$$\ddl(t) \geq \ddl(0)  e^{- B_0 t}, \qquad t \geq 0.$$
As by assumption we have that
$$  - \delta < \ddl(0) < 0,$$
it follows that 
$$- \delta  e^{- B_0 t} \leq \ddl(t), \qquad t \geq 0.$$
As by Eq.~\eqref{eq:ddl_0_1_small_2} we have that
$$\ddl(t) \leq 0, \qquad t \geq 0,$$
and
$$\lim_{t \to \infty}  - \delta  e^{- B_0 t} =0,$$
the result of the lemma then follows.
\end{proof}

The next result provides a condition under which we have for Nash equilibrium  $\mathcal{C}(\LsC,\lsd)$ as given by Proposition~\ref{prop:NE_ld} that
$$ \lim_{t \to \infty}  \ddl(t) = 0,$$
for the case where
$$\ddl(0) > 0.$$

\begin{lemma}\label{lemma:ddl_0_2}
 Let $\mathcal{C}(\LsC,\lsd)$ be a Nash equilibrium as given by Proposition~\ref{prop:NE_ld}, and let  the corresponding communities $C_1(t)$ and  $C_2(t)$, $t \geq0$, be as defined in Section~\ref{sec:model}. Furthermore, let
$$\delta = \LsC - \lsd.$$  
Then the following is true. 
If 
$$ 0 < \ddl(0) < \min \left \{ \delta, 2 \left [ \frac{f_0}{a} - \lsd \right ] \right  \},$$
and
$$ 0 < \ddr(0),$$
as well as 
$$  |\dsl(t)| < \frac{\LsC}{2}, \qquad t \geq 0,$$
then we have that
$$ \ddl(t) \leq \dl B_0 e^{-B_0t}, \qquad t \geq 0,$$
and
$$\lim_{t \to \infty}  \ddl(t) = 0.$$
\end{lemma}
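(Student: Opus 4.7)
The plan mirrors the proof of Lemma~\ref{lemma:ddl_0_1}, adapted to the upward-perturbation case $\ddl(0) > 0$. The strategy is to show directly that $\ddl(t)$ remains in the strip $(0,\dl)$ and decays exponentially, via a continuity bootstrap. The three key ingredients are: (i) a geometric estimate, based on Lemma~\ref{lemma:xs_t}, showing that for $\ddl(t) \in (0,\dl)$ the border agent at $-\LsC + \lsd + \ddl(t)$ is too far from $C_2$ to obtain positive utility there, so the $\max\{0,\,U_{C_2(t)}^{(d)}\}$ term in Eq.~\eqref{eqn:dl_deri} vanishes; (ii) a direct expansion of $U_{C_1(t)}^{(d)}$ at the border that yields the linear differential inequality $\tfrac{d\ddl(t)}{dt} \le -B_0 \,\ddl(t)$; and (iii) a bootstrap extending the validity of (i)--(ii) to all $t \ge 0$.

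For (i), Lemma~\ref{lemma:xs_t} gives $\xs_2(t) = \LsC + \ddr(t)/2$, and the mirror of the present argument applied to $\ddr$ (using the hypothesis $\ddr(0) > 0$ in place of $\ddl(0) > 0$) gives $\ddr(t) \ge 0$. Hence the distance from the border agent to $\xs_2(t)$ equals $2\LsC - \lsd + \ddr(t)/2 - \ddl(t)$, which exceeds $\lsd$ precisely when $\ddl(t) < 2\dl + \ddr(t)/2$; in particular this holds whenever $\ddl(t) < \dl$. Therefore $U_{C_2(t)}^{(d)}(-\LsC + \lsd + \ddl(t)) \le 0$ in this regime, and Eq.~\eqref{eqn:dl_deri} reduces to $\tfrac{d\ddl(t)}{dt} = U_{C_1(t)}^{(d)}(-\LsC + \lsd + \ddl(t))$.

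For (ii), using Lemma~\ref{lemma:xs_t}, Assumption~\ref{assumption:simple_fg}, and the Nash identity $\lsd = f_0/a - c/(ag_0)$ from Proposition~\ref{prop:NE_ld} (with the bound $\ddl(t) < 2[f_0/a - \lsd]$ keeping the inner $\max$ in $f$ inactive), a direct calculation gives
\begin{equation*}
U_{C_1(t)}^{(d)}\Bbl -\LsC + \lsd + \ddl(t) \Bbr \;=\; -\tfrac{E_p E_q\, a\, g_0}{2}\,\Bbl 2\LsC + \dsl(t) \Bbr \,\ddl(t).
\end{equation*}
The hypothesis $|\dsl(t)| < \LsC/2$ then gives $2\LsC + \dsl(t) \ge 3\LsC/2 \ge \LsC$, hence $\tfrac{d\ddl(t)}{dt} \le -B_0\,\ddl(t)$. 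Gr\"onwall's inequality with $\ddl(0) < \dl$ yields $\ddl(t) \le \ddl(0)\,e^{-B_0 t} \le \dl\, e^{-B_0 t}$ in the regime, from which $\ddl(t) \to 0$ follows.

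The main obstacle is the circular dependence in (iii): the reduction in (i) requires $\ddl(t) < \dl$, which is itself a consequence of the exponential decay in (ii). I would resolve this with a standard continuity bootstrap. Let $T^* := \sup\{t \ge 0 : \ddl(s) \in (0,\dl) \text{ for all } s \in [0,t]\}$. By Lemma~\ref{lemma:dl_cont} and $\ddl(0) \in (0,\dl)$ we have $T^* > 0$; on $[0,T^*)$ the analysis above applies and gives $\ddl(t) \le \ddl(0)\, e^{-B_0 t}$, which lies strictly between $0$ and $\dl$, so by continuity $T^*$ cannot be finite. The bootstrap for $\ddr$ used in step (i) runs symmetrically, closing the argument.
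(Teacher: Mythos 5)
Your proof follows the same route as the paper's: kill the $\max\{0,\,U_{C_2(t)}^{(d)}\}$ term by showing the border agent lies more than $\lsd$ away from $\xs_2(t)$, reduce Eq.~\eqref{eqn:dl_deri} to the linear equation $\frac{d\ddl(t)}{dt}=-\tfrac{1}{2}E_pE_q(2\LsC+\dsl(t))ag_0\,\ddl(t)$, and conclude by Gr\"onwall using $|\dsl(t)|<\LsC/2$. The computations in your steps (i) and (ii) are correct and match the paper's.

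The gap is in your bootstrap (iii). The exit-time argument only closes the interval at the top: on $[0,T^*)$ you obtain the \emph{upper} bound $\ddl(t)\le\ddl(0)e^{-B_0t}<\dl$, which rules out $\ddl(T^*)=\dl$, but nothing in that inequality prevents $\ddl$ from reaching the \emph{lower} endpoint $0$ at a finite time, so the claim ``by continuity $T^*$ cannot be finite'' does not follow. And you genuinely need $\ddl(t)\ge 0$ elsewhere: the step from $-\tfrac{1}{2}E_pE_q(2\LsC+\dsl(t))ag_0\,\ddl(t)$ to $-B_0\,\ddl(t)$ reverses for $\ddl(t)<0$, and your argument that $\ddr(t)\ge 0$ likewise relies on the sign of $\ddl$. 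The missing ingredient is the observation the paper packages in Lemma~\ref{lemma:dt_outside_1}: by Lemma~\ref{lemma:Ud_t}, when $\ddl(t)=0$ (i.e. $2\lsd+\ddl(t)=2\lsd=2(f_0/a-c/(ag_0))$) the border agent's utility in $C_1(t)$ is exactly $0$ while its utility in $C_2(t)$ is negative, so $\frac{d\ddl(t)}{dt}=0$ there and the trajectory cannot cross below $0$. Adding this sign check at the lower boundary (and running the $\ddl$ and $\ddr$ bootstraps jointly, since each invariance uses the other's sign) completes your argument and makes it essentially identical to the paper's, which first establishes the two-sided invariance $0\le\ddl(t)\le\ddl(0)$ via Lemma~\ref{lemma:dt_outside_1} and only then applies the Gr\"onwall step.
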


\begin{proof}
Note that  for a Nash equilibrium  $\mathcal{C}(\LsC,\lsd)$ as given by Proposition~\ref{prop:NE_ld} we have that
$$ 0 < \lsd < \LsC$$
and
$$ \lsd =  \left ( \frac{f_0}{a} - \frac{c}{a g_0} \right ).$$
It then follows that
$$\delta  = \LsC - \lsd > 0.$$
Furthermore, if we have that
$$ 0< \ddl(0) < \delta,$$
then we obtain that
$$ 2 \lsd < \lsd + \ddl(0) < 2 \LsC.$$
From Lemma~\ref{lemma:dt_outside_1}, we then have that
\begin{equation}\label{eq:ddl_0_2_small_1}
  2 \lsd \leq   2 \lsd - \ddl(t) \leq  2 \lsd + \ddl(0), \qquad t \geq 0,
\end{equation}  
and it follows that
\begin{equation}\label{eq:ddl_0_2_small_2}
  \ddl(t) \geq 0, \qquad t \geq 0.
\end{equation}  
Therefore in order to prove the result of the lemma, it remains to show that under the conditions given in the lemma  we have that
$$ \ddl(t) < \dl  B_0 e^{-B_0t}, \qquad t \geq 0.$$
Using the assumption that
$$ - \dl < \ddr(0),$$
we have from Lemma~\ref{lemma:dt_outside_1} that 
$$-\dl = - (\LsC - \lsd) <  \ddr(t), \qquad t \geq 0.$$
It then follows that
\begin{equation}\label{eq:ddl_0_2_ddr}
\LsC - \lsd + \ddr(t) > 0, \qquad t \geq 0.
\end{equation}
Furthermore as by Eq.~\eqref{eq:ddl_0_2_small_2}  we have that
$$ \ddl(t) < \dl = \LsC - \lsd, \qquad t \geq 0,$$
it follows that
\begin{equation}\label{eq:ddl_0_2_ddl}
- \LsC + \lsd +  \ddl(t) \leq 0.
\end{equation}  
Combining Eq.~\eqref{eq:ddl_0_2_ddl}~and~\eqref{eq:ddl_0_2_ddr} with Lemma\ref{lemma:xs_t}~and~\ref{lemma:Us_t}, we obtain that 
$$ U_{C_2(t)}^{(d)} \Bbl -\LsC + \lsd + \ddl(t) \Bbr < 0.$$
Combining this result with Eq.~\eqref{eq:ddl_0_1_small_2}, we obtain that
$$ \frac{\ddl(t)}{\delta t} =  U_{C_1(t)}^{(d)}\Bbl -\LsC + \lsd + \ddl(t) \Bbr.$$

As by assumption we have that
$$ \ddl(0) < 2 \Bsbl \frac{f_0}{a} - \lsd \Bsbr,$$
and hence by Lemma~\ref{lemma:dt_outside_1} we have
$$ \ddl(t) < 2 \Bsbl \frac{f_0}{a} - \lsd \Bsbr, \qquad t \geq 0,$$
it follows that 
$$ \frac{\ddl(t)}{\delta t} =  U_{C_1(t)}^{(d)}\Bbl -\LsC + \lsd + \ddl(t) \Bbr = - \frac{1}{2} E_p E_q \Bsbl 2\LsC + \dsl(t) \Bsbr a g_0 \ddl(t), \qquad t \geq 0.$$
As we have from Eq.~\eqref{eq:ddl_0_2_small_2} we have that
$$ \ddl(t) \geq 0, \qquad t \geq 0,$$
and by assumption we have that
$$  |\dsl(t)| < \LsC, \qquad t \geq 0,$$
it follows that
$$ \frac{\ddl(t)}{\delta t} \leq \frac{- E_p E_q \LsC a g_0}{2}  \ddl(t) < 0, \qquad t \geq 0.$$
Recall that $B_0$ is given by
$$B_0 = \frac{E_p E_q \LsC a g_0}{2},$$
and we have
$$ \frac{\ddl(t)}{\delta t} \leq - B_0 \ddl(t), \qquad t \geq 0.$$
The solution to this differential equation with the initial condition
$$\ddl(0) > 0$$
is given by
$$\ddl(t) \leq \ddl(0)  e^{- B_0 t}, \qquad t \geq 0.$$
As by assumption we have that
$$ 0 < \ddl(0) < \dl,$$
it follows that 
$$  \ddl(t) <  \delta  e^{- B_0 t}, \qquad t \geq 0.$$
As by Eq.~\eqref{eq:ddl_0_2_small_2} we have that
$$\ddl(t) \geq 0, \qquad t \geq 0,$$
and
$$\lim_{t \to \infty}  - \delta  e^{- B_0 t} =0,$$
the result of the lemma then follows.
\end{proof}

Using the same argument as given in the proof of Lemma~\ref{lemma:ddl_0_1} and Lemma~\ref{lemma:ddl_0_2}, we obtain the following two results.

\begin{lemma}\label{lemma:ddr_0_1}
 Let $\mathcal{C}(\LsC,\lsd)$ be a Nash equilibrium as given by Proposition~\ref{prop:NE_ld}, and let  the corresponding communities $C_1(t)$ and  $C_2(t)$, $t \geq0$, be as defined in Section~\ref{sec:model}. Furthermore, let
 $$\delta = \LsC - \lsd$$
 and
 $$B_0 = \frac{E_p E_q \LsC a g_0}{2}.$$
Then the following is true. 
If 
$$ - \dl < \ddr(0) < 0 $$
and
$$  0 < \ddl(0) < \dl,$$
as well as
$$  |\dsl(t)| < \frac{\LsC}{2}, \qquad t \geq 0,$$
then we have that
$$ -\dl  B_0 e^{-B_0t} \leq \ddr(t) \leq 0, \qquad t \geq 0,$$
and
$$\lim_{t \to \infty}  \ddr(t) = 0.$$
\end{lemma}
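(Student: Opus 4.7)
Lemma~\ref{lemma:ddr_0_1} is the mirror of Lemma~\ref{lemma:ddl_0_1} under the reflection $y\mapsto -y$ that interchanges the roles of $C_1(t)$ and $C_2(t)$, together with the substitutions $(\ddl,\dsl)\leftrightarrow(-\ddr,-\dsr)$. Accordingly, the plan is to reproduce the argument of Lemma~\ref{lemma:ddl_0_1} with the appropriate sign changes, relying throughout on results already established in Appendix~\ref{app:C1_C2}.

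First I would show that $\ddr(t)\le 0$ for all $t\ge 0$. Since $\mathcal{C}(\LsC,\lsd)$ is a marginalized Nash equilibrium, $\lsd=f_0/a-c/(ag_0)$, so the hypothesis $\ddr(0)<0$ gives $2\lsd-\ddr(0)>2\lsd$. By a contradiction argument in the spirit of Lemma~\ref{lemma:dt_close_1} and invoking the continuity of $\ddr$ from Lemma~\ref{lemma:dl_cont}, any first time $t_1>0$ at which $\ddr(t_1)=0$ would force, by Lemma~\ref{lemma:Us_t}, $U^{(d)}_{C_2(t_1)}(\LsC-\lsd+\ddr(t_1))=0$, so by Eq.~\eqref{eqn:dr_deri} the derivative $d\ddr/dt$ at $t_1$ is nonpositive, ruling out an upward crossing of $0$.

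Second I would reduce the ODE for $\ddr$ to an exponentially contracting linear one. The hypothesis $0<\ddl(0)<\delta$ and Lemma~\ref{lemma:dt_outside_1} give $0\le\ddl(t)\le\ddl(0)<\delta$. Hence the border agent of $C^2_d(t)$ sitting at $y=\LsC-\lsd+\ddr(t)$ and the optimal content $\xs_1(t)=-\LsC+\ddl(t)/2$ produced for $C_1(t)$ (Lemma~\ref{lemma:xs_t}) satisfy
\[
|y-\xs_1(t)|=2\LsC-\lsd+\ddr(t)-\tfrac{\ddl(t)}{2}\ \ge\ \tfrac{\LsC+\lsd}{2}\ >\ \lsd,
\]
where the first inequality uses $\ddr(t)\in[-\delta,0]$ and $\ddl(t)\in[0,\delta]$ together with $\delta=\LsC-\lsd$, and the second uses $\LsC>\lsd$. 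By Assumption~\ref{assumption:simple_fg} this yields $U^{(d)}_{C_1(t)}(y)<0$, so the $\max$ in Eq.~\eqref{eqn:dr_deri} drops and, substituting $\xs_2(t)=\LsC+\ddr(t)/2$ and $a\lsd=f_0-c/g_0$,
\[
\frac{d\ddr(t)}{dt}=-\tfrac{1}{2}E_pE_q\bigl(2\LsC-\dsr(t)\bigr)a g_0\,\ddr(t).
\]

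Finally, the hypothesis $|\dsl(t)|<\LsC/2$, together with the companion bound on $\dsr(t)$ inherited from Lemma~\ref{lemma:dt_U_s}, yields $2\LsC-\dsr(t)\ge \LsC$, hence $d\ddr/dt\ge -B_0\ddr(t)$ with $B_0=E_pE_q\LsC a g_0/2$. Solving this linear differential inequality by Gr\"onwall, together with $\ddr(0)>-\delta$, gives $\ddr(t)\ge\ddr(0)e^{-B_0 t}>-\delta e^{-B_0 t}$, and combined with step one yields the decay bound $-\delta e^{-B_0 t}\le\ddr(t)\le 0$ and in particular $\lim_{t\to\infty}\ddr(t)=0$. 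The main obstacle, as in Lemma~\ref{lemma:ddl_0_1}, is step one: keeping $\ddr(t)$ inside the regime where the simplified linear ODE is valid. The marginalized hypothesis $\lsd<\LsC$ is essential here, since it is what provides the strict gap used in the distance estimate above; without it, $U^{(d)}_{C_1(t)}(y)$ could be non-negative and Eq.~\eqref{eqn:dr_deri} would not reduce to a single-community linear contraction.
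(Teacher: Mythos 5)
Your proposal is correct and matches the paper, which gives no separate proof of this lemma but simply asserts it follows ``using the same argument as given in the proof of Lemma~\ref{lemma:ddl_0_1} and Lemma~\ref{lemma:ddl_0_2}''; you have carried out exactly that mirrored argument, arriving at the same linear contraction $d\ddr/dt=-\tfrac{1}{2}E_pE_q(2\LsC-\dsr(t))ag_0\,\ddr(t)$ and the same Gr\"onwall bound. One small slip: at a first crossing time $t_1$ with $\ddr(t_1)=0$, Eq.~\eqref{eqn:dr_deri} gives $d\ddr/dt=\max\{0,U^{(d)}_{C_1(t_1)}(\cdot)\}\geq 0$, so the crossing is ruled out only after also noting (as your step two does) that $U^{(d)}_{C_1(t_1)}$ of $C_2$'s border agent is negative, making the derivative exactly zero rather than ``nonpositive by Eq.~\eqref{eqn:dr_deri}'' alone.
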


\begin{lemma}\label{lemma:ddr_0_2}
 Let $\mathcal{C}(\LsC,\lsd)$ be a Nash equilibrium as given by Proposition~\ref{prop:NE_ld}, and let  the corresponding communities $C_1(t)$ and  $C_2(t)$, $t \geq0$, be as defined in Section~\ref{sec:model}. Furthermore, let
 $$\delta = \LsC - \lsd$$
 and
 $$B_0 = \frac{E_p E_q \LsC a g_0}{2}.$$
Then the following is true. 
If 
$$ 0 < \ddr(0) < \dl $$
and
$$  0 < \ddl(0) < \dl,$$
as well as
$$  |\dsl(t)| < \frac{\LsC}{2}, \qquad t \geq 0,$$
then we have that
$$ \dl  B_0 e^{-B_0t} \geq \ddr(t) \geq 0, \qquad t \geq 0,$$
and
$$\lim_{t \to \infty}  \ddl(t) = 0.$$
\end{lemma}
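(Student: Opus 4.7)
The plan is to mirror the proof of Lemma~\ref{lemma:ddl_0_2}, exploiting the symmetry of the perturbation dynamics between the two adjacent communities. First I would note that for a Nash equilibrium from Proposition~\ref{prop:NE_ld} we have $\lsd = \frac{f_0}{a} - \frac{c}{a g_0}$ and $\lsd < \LsC$, so $\delta = \LsC - \lsd > 0$. The assumption $0 < \ddr(0) < \delta$ places $\ddr(0)$ in the regime where an argument analogous to that behind Lemma~\ref{lemma:dt_outside_1}, but applied to the right boundary, confines $0 \leq \ddr(t) \leq \ddr(0) < \delta$ for all $t \geq 0$; this already takes care of the non-negativity half of the claim.

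For the exponential upper bound I would next collapse the $\max$ in Eq.~\eqref{eqn:dr_deri}. By Lemma~\ref{lemma:xs_t} the optimal content in $C_1(t)$ is $\xs_1 = -\LsC + \ddl(t)/2$, and the torus-metric distance from the border agent $y = \LsC - \lsd + \ddr(t)$ to $\xs_1$ equals $2\LsC - \lsd + \ddr(t) - \ddl(t)/2$. Since $\ddl(t) < \delta$ and $\ddr(t) \geq 0$, this distance exceeds $\lsd$, so the explicit form of $f$ and $g$ from Assumption~\ref{assumption:simple_fg} combined with the utility formula in Appendix~\ref{appendix:utilities} forces $U_{C_1(t)}^{(d)}(\LsC - \lsd + \ddr(t)) < 0$. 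Hence Eq.~\eqref{eqn:dr_deri} reduces to $\frac{d\ddr(t)}{dt} = -U_{C_2(t)}^{(d)}(\LsC - \lsd + \ddr(t))$.

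Then I would linearize: using $\xs_2 = \LsC + \ddr(t)/2$ in the utility formula, a direct computation (with $f(x)=\max\{0,f_0-ax\}$, $g(x)=g_0$, and the simplification $f_0 - a\lsd = c/g_0$) gives
\[
U_{C_2(t)}^{(d)}(\LsC - \lsd + \ddr(t)) = \tfrac{1}{2} E_p E_q (2\LsC - \dsr(t)) a g_0 \ddr(t).
\]
Invoking the hypothesis $|\dsl(t)| < \LsC/2$ (together with the corresponding bound on $\dsr(t)$, which follows from the symmetry of the system or from an analog of Lemma~\ref{lemma:dt_U_s}) yields the linear differential inequality $\frac{d\ddr(t)}{dt} \leq -B_0 \ddr(t)$ with $B_0 = E_p E_q \LsC a g_0/2$. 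Integrating against $\ddr(0) < \delta$ delivers $0 \leq \ddr(t) \leq \delta e^{-B_0 t}$, from which both the stated bound and $\lim_{t \to \infty} \ddr(t) = 0$ (the statement prints $\ddl$, apparently a typo mirroring the earlier lemmas) follow immediately.

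The main technical obstacle is keeping $\ddl(t)$ controlled throughout so that the $U_{C_1(t)}^{(d)}$ term stays non-positive. The hypothesis $0 < \ddl(0) < \delta$ is exactly the input of Lemma~\ref{lemma:ddl_0_2}, which, combined with the bound on $\dsl(t)$, pins $\ddl(t) \in [0,\delta)$ for all $t \geq 0$; invoking that companion estimate in parallel with the argument above is what makes the distance lower bound for the border agent go through and hence what closes the proof.
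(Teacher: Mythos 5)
Your proposal is correct and follows essentially the same route as the paper, which gives no separate proof of this lemma and simply appeals to ``the same argument'' as Lemmas~\ref{lemma:ddl_0_1} and~\ref{lemma:ddl_0_2}; your explicit computation (collapsing the $\max$ because the border agent is too far from $\xs_1$, then linearizing $U_{C_2(t)}^{(d)}$ via $f_0-a\lsd=c/g_0$ to get $\frac{d\ddr(t)}{dt}\leq -B_0\ddr(t)$) is exactly that argument transported to the right boundary. Your observations that the stated limit should read $\ddr$ and that the supply-side hypothesis should involve $\dsr$ (recoverable via Lemma~\ref{lemma:dt_U_s}) are consistent with the paper's own treatment of the mirrored cases.
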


\newpage
\section{Proof of Proposition \ref{prop:not_stable}}
\label{appendix:not_stable_prop}

In this appendix, we prove Proposition~\ref{prop:not_stable}. 
Let $\mathcal{C}(\LsC,\LsC)$ be a Nash equilibrium as given in  Proposition~\ref{prop:not_stable}, and we have that
$$L^*_C <  \left ( \frac{f_0}{a} - \frac{c}{a g_0} \right ).$$
We then show that for every $\delta > 0$, there exist initial perturbations $\ddl(0)$,  $\ddr(0)$,  $\dsl(0)$, and $\dsr(0)$, such that
$$ 0 < |\ddl(0)|,|\ddr(0)|,|\dsl(0)|,|\dsr(0)| < \delta,$$
such that
$$\lim_{t \to \infty} \ddl(t) > 0$$
and
$$\lim_{t \to \infty} \ddr(t) > 0,$$
as well as
$$\lim_{t \to \infty} \dsl(t) > 0$$
and
$$\lim_{t \to \infty} \dsr(t) > 0.$$
Using Definition~\ref{def:stable}~and~\ref{def:neutral-stable}, these results imply that  $\mathcal{C}(\LsC,\LsC)$ is neither a stable, or a neutral-stable, Nash equilibrium.

Let $\delta > 0$ be such that
$$  2\LsC + \delta < 2 \left ( \frac{f_0}{a} - \frac{c}{a g_0} \right ). $$

Let the constants $K$ and $M$ be as given in Lemma~\ref{lemma:dt_no_gap}. As by assumption we have that
$$L^*_C <  \left ( \frac{f_0}{a} - \frac{c}{a g_0} \right ),$$
it follows that
$$K = E_p E_q \Bsbl -2c +2f_0 g_0 - 2\LsC a g_0 \Bsbr > 0.$$
Furthermore, by definition we have that 
$$M= E_p E_q \Bsbl  2\LsC a g_0 \Bsbr > 0.$$
Using these definitions,  let $\epsilon_d$, $ 0 < \epsilon_d < \delta$,  and $\epsilon_s$,  $ 0 < \epsilon_s < \delta$, be such that
\begin{equation}\label{eq:epsilon}
  \Bsbl K \epsilon_s  - M \epsilon_d \Bsbr > 0.
\end{equation}  
Note that such a $\epsilon_d$ and $\epsilon_s$ can always be obtained by making $\epsilon_d$ small enough.

Using these definitions,  let the initial conditions 
$$ \ddl(0) = \ddr(0) = \delta_{d}(0)$$
and
$$ \dsl(0) = \dsr(0) = \delta_{s}(0),$$
be such that
$$0 < \epsilon_d <\delta_d(0) < \delta$$
and
$$0 < \epsilon_s < \delta_s(0) < \delta.$$

Under these initial conditions, we have by Lemma~\ref{lemma:dt_no_gap} that
$$\ddl(t) = \ddr(t) =  \delta_{d}(t), \qquad t \geq 0,$$
and
$$\dsl(t) = \dsr(t) =  \delta_{s}(t), \qquad t \geq 0,$$
as well as
\begin{equation}\label{eq:dt_d}
  \frac{d \ddl(t)}{\delta t} = \frac{d \ddr(t)}{\delta t} =  \frac{d \delta_{d}(t)}{\delta t} =  K \delta_s(t)  - M \delta_d(t) , \qquad t \geq 0,
\end{equation}  
and
\begin{equation}\label{eq:dt_s}
  \frac{d \dsl(t)}{\delta t} = \frac{d \dsr(t)}{\delta t} =  \frac{d \delta_{s}(t)}{\delta t} =  K \delta_d(t), \qquad t \geq 0.
\end{equation}  

We then  prove the result of the  proposition as follows. 
Suppose that
$$\lim_{t \to \infty} \delta_d(t) = 0.$$
As by by assumption we have that
$$\delta_d(0) > \epsilon_d$$
and by  Lemma~\ref{lemma:dl_cont} the function $\delta_d(t)$ is continuous in $t$ for $t \geq 0$, we then have for this case that there exists a time $t_1 > 0$ such that 
$$\delta_d(t_1) = \epsilon_d$$
and
$$\delta_d(t) >  \epsilon_d, \qquad 0 \leq t < t_1,$$
as well as
\begin{equation}\label{eq:prop1_dd}
  \frac{ d \delta_{d}(t_1)}{dt} < 0.
\end{equation}  
In the following we show that this cannot be the case.

Note that the following is true for this time $t_1$.
As we have that
$$ \delta_d(t) \geq \epsilon_d > 0, \qquad 0 \leq t \leq t_1,$$
it follows that from Eq.~\eqref{eq:dt_s} that
$$ \frac{ d \delta_{s}(t)}{dt} > 0, \qquad 0 \leq t \leq t_1,$$
and we obtain that
$$\delta_s(t_1) > \delta_s(0) > \epsilon_s.$$

Using this result, it follows from Eq.~\eqref{eq:epsilon}~and~Eq.~\eqref{eq:dt_d} that
$$\frac{ d \delta_{d}(t_1)}{dt} > 0.$$
However this result contradicts Eq.~\ref{eq:prop1_dd} which states that if
$$\lim_{t \to \infty} \delta_d(t) = 0,$$
there exists a time $t_1 > 0$ such that
$$\delta_d(t_1) = \epsilon_d$$
and
$$ \frac{ d \delta_{d}(t_1)}{dt} < 0.$$
It then follows that  we have that
$$ \delta_d(t) \geq  \epsilon_d, \qquad t \geq 0,$$
and
$$\lim_{t \to \infty} \delta_d(t) \geq \epsilon_d.$$

Moreover, for time $t \geq 0$ such that
$$\delta_d(t) \geq \epsilon_d > 0$$
we have that
$$\frac{ d \delta_{s}(t)}{dt} > 0.$$
Using the above result that
$$ \delta_d(t) \geq  \epsilon_d, \qquad t \geq 0,$$
it then follows that
$$ \delta_s(t) \geq  \delta_s(0) > \epsilon_s > 0, \qquad t \geq 0,$$
and
$$\lim_{t \to \infty} \delta_s(t) \geq \delta_s(0) > \epsilon_s > 0.$$
This establishes the result of the proposition.

\newpage
\section{Proof of Proposition \ref{prop:stable}}
\label{appendix:stable_prop}

Let $\mathcal{C}(\LsC,\lsd)$ be a Nash equilibrium with
$$\lsd = \left ( \frac{f_0}{a} - \frac{c}{a g_0} \right ) < \LsC$$
as given in Proposition~\ref{prop:stable}.
 We then show that there exists a  $\delta > 0$, such that if for initial perturbations $\ddl(0)$,  $\ddr(0)$,  $\dsl(0)$, and $\dsr(0)$, we have 
$$ 0 < |\ddl(0)|,|\ddr(0)|,|\dsl(0)|,|\dsr(0)| < \delta,$$
then we have that
$$\lim_{t \to \infty} \ddl(t) = \lim_{t \to \infty} \ddr(t) = 0.$$
Using Lemma~\ref{lemma:neutral}, this result implies that
$$ \lim_{t \to \infty} U_{C_1(t)}^{(d)}(y) =  \lim_{t \to \infty}  U_{C_2(t)}^{(d)}(y) > 0, \qquad y \in C^1_d \cup  C^2_d,$$
and by Definition~\ref{def:neutral-stable} $\mathcal{C}(\LsC,\lsd)$ is a neutral-stable Nash equilibrium.

To prove that there exists a  $\delta > 0$, such that if for initial perturbations $\ddl(0)$,  $\ddr(0)$,  $\dsl(0)$, and $\dsr(0)$, we have 
$$ 0 < |\ddl(0)|,|\ddr(0)|,|\dsl(0)|,|\dsr(0)| < \delta,$$
then we have that
$$\lim_{t \to \infty} \ddl(t) = \lim_{t \to \infty} \ddr(t) = 0,$$
we proceed as follows. 
Let
$$B_ 0 = \frac{E_p E_q \LsC a g_0}{2},$$
and let  $\delta$ be such that
$$0 < \delta < \min \left
\{ 1, \frac{L^*_c - \lsd}{2},
\frac{1}{2} \left [ \frac{f_0}{a} - \lsd \right ],
\frac{\LsC}{4},
\frac{\LsC B_0 }{ E_p E_q a g_0 \delta }
\right \}.$$
Note that for this definition of $\delta$ and for
$$|\ddl(0)| <  \delta,$$
we have that
$$-\LsC + \lsd + \ddl(0) \leq - \frac{L^*_c - \lsd}{2},$$
and
\begin{equation}\label{eq:ddl_LsC_lsd}
  \ddl(0) < L^*_c - \lsd.
\end{equation}  
Using the same argument, we obtain that
\begin{equation}\label{eq:ddr_LsC}
  - \Bbl L^*_c - \lsd \Bbr <  \ddr(0).
\end{equation}  

Using Lemma~\ref{lemma:ddl_0_1}~-~\ref{lemma:ddr_0_2} it then follows that if we have that
$$  |\dsr(t)| < \frac{\LsC}{2}, \qquad t \geq 0,$$
and
$$  |\dsr(t)| < \frac{\LsC}{2}, \qquad t \geq 0,$$
then we obtain that
$$\lim_{t \to \infty} \ddl(t) = \lim_{t \to \infty} \ddr(t) = 0.$$

Therefore in order to prove the result, it remains to show that we indeed have that
$$  |\dsl(t)| < \frac{\LsC}{2}, \qquad t \geq 0$$
and
$$  |\dsr(t)| < \frac{\LsC}{2}, \qquad t \geq 0.$$

We first consider the perturbation $\dsl(t)$. From Eq.~\eqref{eq:ddl_LsC_lsd}~and~\eqref{eq:ddr_LsC}, we have that
$$ \ddl(0) < L^*_c - \lsd$$
and
$$ - \Bbl L^*_c - \lsd \Bbr <  \ddr(0),$$
and by construction we have that
$$|\ddl(0)|, |\ddr(0)| <  \delta.$$
Using Lemma~\ref{lemma:ddl_0_1}~-~\ref{lemma:ddr_0_2}, we then obtain  that if
$$  |\dsl(t)|, |\dsr(t)| < \frac{\LsC}{2}, \qquad t \geq 0,$$
then we have that
$$ |\ddl(t)|, |\ddr(t)| <  \delta  e^{- B_0 t}, \qquad t \geq 0.$$
Combining this result with Lemma~\ref{lemma:dt_s_gap}, we obtain that if
$$  |\dsl(t)|, |\dsr(t)| < \frac{\LsC}{2}, \qquad t \geq 0,$$
then we have that
$$  \left |\frac{\dsl(t)}{\delta t} \right | <
E_p E_q \Bsbl \frac{a g_0}{2}\Bsbr \dl^2 e^{- 2B_0 t}.$$
As by construction we have that
$$ \delta < 1,$$
it follows that
$$ \delta^2 < \delta,$$
and  we obtain in this case that
$$  \left |\frac{\dsl(t)}{\delta t} \right | <
E_p E_q \Bsbl \frac{a g_0}{2}\Bsbr\delta e^{- 2B_0 t}.$$
Using this result, we obtain that the change in perturbation $\dsl(t)$ is bounded by
$$ \left | \dsl(t=0) - \dsl(t) \right |
<
\frac{E_p E_q a g_0 \delta  }{4 B_0} \qquad t \geq 0.$$

As by construction we have that
$$ | \dsl(t=0)| < \dl$$
and
$$ \delta < \min
\left \{ \frac{\LsC}{4}, \frac{\LsC B_0 }{ E_p E_q a g_0 \delta } \right \},$$
it then follows that we indeed have that
$$ |\dsl(t)| <  \frac{\LsC}{2}, \qquad t \geq 0.$$
Using the same argument, we can show that we also have that
$$ |\dsr(t)| <  \frac{\LsC}{ 2 }, \qquad t \geq 0.$$
This completes the proof to show that
$$\lim_{t \to \infty} \ddl(t) = \lim_{t \to \infty} \ddr(t) = 0.$$

\end{document}